\documentclass[11pt]{amsart}

\usepackage{amsmath,amsthm,amsfonts,amssymb,bm,graphicx,cprotect}

\usepackage{hyperref}
\hypersetup{colorlinks=true,citecolor=blue,linkcolor=blue,urlcolor=blue,
pdfstartview=FitH, pdfauthor=Valentin Blomer and Gergely
Harcos and Djordje Milicevic, pdftitle=Bounds for eigenforms on arithmetic hyperbolic 3-manifolds}

\textwidth=7in \textheight=9in \topmargin 0cm \oddsidemargin -0.5cm \evensidemargin -0.5cm

\theoremstyle{plain}
\newtheorem{theorem}{Theorem}
\newtheorem{lemma}{Lemma}

\theoremstyle{remark}
\newtheorem{remark}{Remark}

\theoremstyle{definition}
\newtheorem*{acknowledgement}{Acknowledgements}
\newtheorem*{notation}{Notation}

\numberwithin{equation}{section}

\renewcommand{\geq}{\geqslant}
\renewcommand{\leq}{\leqslant}
\renewcommand{\mod}{\,\mathrm{mod}\,}

\DeclareMathOperator{\GL}{GL}
\DeclareMathOperator{\PGL}{PGL}
\DeclareMathOperator{\SL}{SL}
\DeclareMathOperator{\PSL}{PSL}
\DeclareMathOperator{\SO}{SO}
\DeclareMathOperator{\U}{U}
\DeclareMathOperator{\SU}{SU}
\DeclareMathOperator{\Z}{Z}
\DeclareMathOperator{\M}{M}
\DeclareMathOperator{\sgn}{sgn}
\DeclareMathOperator{\vol}{vol}
\DeclareMathOperator{\re}{Re}
\DeclareMathOperator{\im}{Im}
\DeclareMathOperator{\imm}{im}
\DeclareMathOperator{\sech}{sech}
\DeclareMathOperator*{\res}{res}

\newcommand{\eps}{\varepsilon}
\newcommand{\RR}{\mathbb{R}}
\newcommand{\CC}{\mathbb{C}}
\newcommand{\QQ}{\mathbb{Q}}
\newcommand{\ZZ}{\mathbb{Z}}

\newcommand{\HH}{\mathbb{H}}
\newcommand{\FF}{\mathcal{F}}
\newcommand{\LL}{\mathcal{L}}
\newcommand{\ov}[1]{\overline{#1}}
\newcommand{\OO}{\mathcal{O}_K}

\begin{document}

\author{Valentin Blomer}
\author{Gergely Harcos}
\author{Djordje Mili\'cevi\'c}

\address{Mathematisches Institut, Bunsenstr. 3-5, D-37073 G\"ottingen, Germany} \email{blomer@uni-math.gwdg.de}
\address{MTA Alfr\'ed R\'enyi
Institute of Mathematics, POB 127, Budapest H-1364, Hungary}\email{gharcos@renyi.hu}
\address{Central European University, Nador u. 9, Budapest H-1051, Hungary}\email{harcosg@ceu.hu}
\address{Bryn Mawr College, Department of Mathematics, 101 North Merion Avenue, Bryn Mawr, PA 19010, U.S.A.}\email{dmilicevic@brynmawr.edu}

\title{Bounds for eigenforms on arithmetic hyperbolic 3-manifolds}

\dedicatory{Dedicated to Peter Sarnak on the occasion of his sixtieth birthday}

\thanks{First author supported by the Volkswagen Foundation and ERC Starting Grant~258713. Second author supported by OTKA grants K~101855 and K~104183 and ERC Advanced Grant~228005.}

\keywords{sup-norm, automorphic form, arithmetic hyperbolic 3-manifold, amplification, pre-trace formula, diophantine analysis, geometry of numbers}

\begin{abstract}
On a family of arithmetic hyperbolic $3$-manifolds of squarefree level, we
prove an upper bound for the sup-norm of Hecke--Maa{\ss} cusp forms, with a
power saving over the local geometric bound simultaneously in the Laplacian
eigenvalue and the volume. By a novel combination of diophantine
and geometric arguments in a noncommutative setting, we obtain bounds as
strong as the best corresponding results on arithmetic surfaces.
\end{abstract}

\subjclass[2000]{Primary 11F72, 11F55, 11J25}

\setcounter{tocdepth}{2}

\maketitle

\section{Introduction}

The eigenfunctions of the Laplace operator on a Riemannian manifold and their limiting behavior play a central role in such diverse fields as harmonic and global analysis, thermodynamics, spectral geometry, or quantum mechanics. The distribution of mass of eigenfunctions has been extensively studied in particular with respect to the Quantum Unique Ergodicity Conjecture, bounds for $L^p$-norms, or restriction problems. Much of the recent exciting progress on these questions has been in the case of arithmetic manifolds, where the eigenfunctions respecting also the arithmetic symmetries arise from automorphic forms.

This paper presents bounds on $L^{\infty}$-norms of automorphic forms, which may be regarded as a special case of the restriction problem (where the cycle is reduced to a single point). Good sup-norm bounds in terms of the properties of the automorphic form (for instance its Laplacian eigenvalue) or properties of the underlying symmetric space (for instance its volume) have diverse arithmetic and analytic applications; we mention here only bounds for exponential sums with Hecke eigenvalues (see e.g.\ \cite{HM, Te}), bounds for $L$-functions and shifted convolution problems (see e.g.\ \cite{BHa, HM, Ma}), the multiplicity problem (see e.g.\ \cite{Sa} for an illuminating discussion), and control over the zero set or nodal lines of automorphic functions (see e.g.\ \cite{Ru, GRS}).

For a compact Riemannian manifold $X$ and an $L^2$-normalized eigenfunction $\phi$ for the Laplacian of eigenvalue $\lambda$, one has the general bound (see e.g.\ \cite{So, Do})
\begin{equation}
\label{UpperBound}
\| \phi \|_{\infty} \ll_X \lambda^{\nu(X)} \qquad\text{with}\qquad\nu(X)=(\dim X-1)/4,
\end{equation}
which is sharp for $X=S^n$. For hyperbolic manifolds one expects substantially stronger bounds. The first breakthrough was the result of Iwaniec--Sarnak~\cite{IS} that, for (compact and non-compact) \emph{arithmetic} hyperbolic surfaces and eigenfunctions $\phi$ that are simultaneous eigenfunctions of the Hecke algebra, \eqref{UpperBound} holds with any $\nu(X)>5/24$. VanderKam~\cite{VdK} established similar results for Hecke--Laplace eigenfunctions on the sphere.

In this paper, we focus our attention on the harder case of  arithmetic hyperbolic $3$-manifolds and prove strong sup-norm bounds for Hecke--Maa{\ss} cusp forms. Our manifolds are certain (non-compact) congruence quotients of the upper half space $\HH^3$, viewed as a subset of Hamiltonian quaternions with vanishing fourth coordinate. The upper half space comes naturally with a transitive action of $\SL_2(\CC)$ by fractional linear transformations, and the stabilizer of any point is a conjugate of $\SU_2(\CC)$. Hence we can identify
\[\HH^3\cong \SL_2(\CC)\slash\SU_2(\CC)\cong\Z(\CC)\backslash\GL_2(\CC)\slash\U_2(\CC),\]
and our setup is an analogue of the classical case of hyperbolic surfaces, where $\QQ$ is replaced by an imaginary quadratic number field.  The real
quadratic space of signature $(3,1)$ comes with a natural faithful action of
$\PSL_2(\CC)$, and the image of $\PSL_2(\CC)$ in $\SO_{3,1}(\RR)$ turns out to be the
connected component of the identity \cite[pp.~149--150]{Sh}.
From this point of view, one can regard the present work as a first step towards bounding the sup-norm of automorphic forms on higher-dimensional hyperbolic spaces $\HH^n = \SO^+_{n,1}(\RR)/\SO_n(\RR)$.\\

The large eigenvalue limit is of central importance for the correspondence principle of quantum mechanics. In a classical dynamical system on a manifold $X$, the state (position and momentum) of a particle is described by points in the unit tangent bundle $T^{\ast}X$, with classical observables being functions $a:T^{\ast}X\to\mathbb{C}$; corresponding to this in a quantized system are quantum states $u\in L^2(X)$ and quantum observables, linear operators $\text{Op}(a)$ on $L^2(X)$. The correspondence principle asks for a relation between the classical mechanics and quantum mechanics in the semiclassical limit $\hbar\to 0$, which, in the case of classical Hamiltonian dynamics given by the geodesic flow on $T^{\ast}X$, is the large eigenvalue limit $\lambda\to\infty$. In particular, it has been suggested that the eigenfunctions of a classically ergodic system (such as the geodesic flow on negatively curved manifolds) might be expected to exhibit random patterns of interference extrema and thus rather temperate intensity fluctuations when the wavenumber goes to infinity, as compared to the integrable case. Hence the expectation that a stronger version of \eqref{UpperBound} holds in this context and, in particular, the ``subconvexity conjecture'' that on arithmetic hyperbolic manifolds \eqref{UpperBound} holds with some $\nu(X)<(\dim X-1)/4$.

The sup-norm problem on arithmetic hyperbolic 3-manifolds is substantially more refined and presents a number of new features indicative of what one might expect in the case of higher dimension and rank. In \cite{MR}, Maclachlan and Reid address the question of identifying those arithmetic hyperbolic 3-manifolds that contain immersed totally geodesic surfaces and find a striking dichotomy in terms of the underlying invariant trace field and invariant trace quaternion algebra: an arithmetic hyperbolic 3-manifold either contains no such immersed surfaces, or else it contains infinitely many incommensurable immersed totally geodesic surfaces, all of them arithmetic. The congruence subgroups $\Gamma_0(N)$ of the Bianchi group (see \eqref{Gamma0N}) give rise to prototypical non-compact arithmetic 3-manifolds of Maclachlan--Reid type. It has been shown in \cite{Mi} (see also \cite{RS} for the first result in a compact setting) that on any arithmetic hyperbolic 3-manifold of Maclachlan--Reid type there exists an infinite orthonormal family of cusp forms $\phi$ with the \emph{lower} bound
\[\| \phi \|_{\infty} \gg_{X,\eps} \lambda^{1/4-\eps}.\]
While such a lower bound does not contradict the equidistribution of mass on the level of the Quantum Unique Ergodicity Conjecture (QUE), it does falsify the na\"\i ve expectation that \eqref{UpperBound} might hold with an arbitrary small $\nu(X)>0$ (as would be suggested by the random wave model). From the point of view of the correspondence principle, the coincidence between the discrete layers of power growth and the immersed geometric features is extremely intriguing. On the other hand, in proving the lower bound by leveraging the power of the Hecke algebra, the Maclachlan--Reid algebraic structure manifests itself in the fact that a dense family of special points on our 3-manifolds have large rational stabilizers. This feature is a major obstacle in the upper-bound problem that we must overcome.

The complexity of the chaotic system formed by the quantum eigenstates on an arithmetic manifold can increase
in other directions in addition to the large eigenvalue limit. For example, the pushforwards to the modular surface of Hecke--Maa{\ss} cusp forms of increasing level also equidistribute in mass, in the spirit of QUE~\cite{Ne,NPS}. For a high degree cover $X$ of a fixed manifold (such as a congruence cover of the modular surface), of particular interest is the dependence of the bound \eqref{UpperBound} on $X$, or equivalently on the underlying discrete group. Strong dependence on $X$  was achieved for the first time in \cite{BHo} for congruence covers $\Gamma_0(N) \backslash \HH$ of the modular surface, in the case of squarefree $N$. This result was refined considerably in \cite{HT1,HT2}, leading to the state-of-the-art hybrid bound
\[\| \phi \|_{\infty} \ll_{\eps} \lambda^{5/24}N^{-1/6}(\lambda N)^\eps,\]
established by Templier~\cite{Te}, for Hecke--Maa{\ss} cuspidal newforms of squarefree level $N$.

Arithmetic hyperbolic 3-manifolds arising from congruence subgroups of large level are particularly interesting from a geometric point of view. By Mostow--Prasad rigidity, the volume of a hyperbolic 3-manifold is a topological invariant that can, in arithmetic cases, be explicitly expressed in terms of the arithmetic data and, for congruence covers in particular, in terms of the level of the underlying congruence Kleinian group. Hecke operators, the principal tool for arithmetic improvements on \eqref{UpperBound}, have their counterparts acting on the cohomology of these arithmetic hyperbolic 3-manifolds. For covers of large level, this action and the same algebraic structure that is responsible for large stabilizers underlie a number of geometric features including strong forms of the Virtual Infinite Betti Number Conjecture~\cite{GS,KS}, towers of hyperbolic rational homology 3-spheres with arbitrarily large injectivity radius that are related to the Virtual Haken Conjecture~\cite{CD}, and many more (see e.g. \cite{Sen}).

Our principal results are strong upper bounds for Hecke--Maa{\ss} cusp forms on congruence arithmetic hyperbolic 3-manifolds both in terms of the eigenvalue and the volume of the manifold. We proceed to describe them in more detail.\\

Denoting by $\{1,i,j,k\}$ the usual basis of Hamiltonian quaternions, we shall write a typical point $P \in \HH^3$ as $P = z + rj$ with $r>0$ and $z = x + yi \in \CC$. Correspondingly, we introduce the notation
\begin{equation}\label{parts}
\Im(P):=r,\qquad \re(z):=x, \qquad \im(z):=y.
\end{equation}

Let $K$ be the Gaussian number field $\QQ(i)$ with ring of integers $\OO = \ZZ[i]$. The group $\SL_2(\CC)$ acts on $\HH^3$ by orientation-preserving isometries as in \eqref{action}, and $\SL_2(\OO)$ is a discrete subgroup of it: essentially the Bianchi group associated with $K$. In this paper, we focus on the congruence subgroups
\begin{equation}
\label{Gamma0N}
\Gamma_0(N) := \left\{\begin{pmatrix} a & b\\c & d\end{pmatrix} \in \SL_2(\OO) : N \mid c\right\}
\end{equation}
for \emph{squarefree} Gaussian integers $N \in \ZZ[i]$. The quotient $\Gamma_0(N) \backslash \HH^3$ is a non-compact arithmetic hyperbolic $3$-manifold  with   volume
\[V := \int_{\Gamma_0(N) \backslash \HH^3}  \frac{dx \, dy \, dr}{r^3}   \asymp  [\SL_2(\OO) : \Gamma_0(N)] = |N|^{2 + o(1)}.\]

The Riemannian manifold $\Gamma_0(N) \backslash \HH^3$  is acted on by the Laplace operator $r^2(\partial^2_x + \partial^2_y + \partial_r^2) - r\partial_r$  and the entire family of Hecke operators $T_n$, indexed by $n \in  \OO$ with $(n, N) = 1$, that we describe precisely in the next section. These operators are self-adjoint and commute with each other, and so a basis can be chosen of the cuspidal part of $L^2(\Gamma_0(N)\backslash\HH^3)$ consisting of their joint eigenfunctions. We consider Hecke--Maa{\ss} cuspidal newforms $\phi$ of level $N$ and Laplacian eigenvalue $\lambda$. For notational convenience, we introduce the spectral parameter
\begin{equation}\label{SpectralParameter}
t := \sqrt{\lambda - 1}\in \RR \cup (-1, 1)i,
\end{equation}
and throughout this paper we write
\begin{equation}\label{Tdef}
T := \max(1,|t|) \asymp 1 + \sqrt{\lambda}.
\end{equation}

We shall see in Section~\ref{trivialboundsection} that fairly standard estimates yield the clean uniform bound
\begin{equation}\label{trivial}
 \| \phi \|_{\infty} \ll_{\eps} T (TV)^{\eps}.
\end{equation}
This statement is a quantitative analogue of the local geometric bound \eqref{UpperBound}
for compact Riemannian manifolds: we shall refer to it as the ``trivial bound", although its proof relies on various arithmetic
facts to the extent that its validity for general $N\in\OO$ is unclear\footnote{Very recently, a remarkable non-arithmetic approach for classical holomorphic cusp forms was given in \cite{FJK}.} (as opposed to squarefree $N\in\OO$ that we restrict to from now on).
We start with a result that improves \eqref{trivial} in the volume aspect.

\begin{theorem}\label{thm1} Let $N\in\OO$ be squarefree and $\phi$ an $L^2$-normalized Hecke--Maa{\ss} cuspidal newform on $\Gamma_0(N)\backslash \HH^3$ with spectral parameter $t$. Then for any $\eps > 0$ we have
\[\| \phi \|_{\infty} \ll_{\eps}  T V^{-1/6} (TV)^{\eps}.\]
\end{theorem}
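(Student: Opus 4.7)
The plan is to apply the amplification method of Iwaniec--Sarnak together with an Atkin--Lehner normalization of the base point, adapting the strategy of Blomer--Holowinsky and Harcos--Templier on arithmetic surfaces to the noncommutative quaternionic geometry of $\HH^3$. First I would fix a nonnegative $\SU_2(\CC)$-bi-invariant test function $k$ on $\GL_2(\CC)$ whose spherical transform is positive on the full spectrum, of order $1$ at the spectral parameter $t$, and essentially supported in balls of radius $\asymp 1/T$; then I would build an amplifier out of Hecke operators $T_\ell$ and $T_{\ell^2}$ for Gaussian primes $\ell$ coprime to $N$ in some set $\mathcal{L}$ of cardinality $L$. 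Exploiting the Hecke relation $\lambda_\phi(\ell)^2 - \lambda_\phi(\ell^2) = 1$ for $\ell \nmid N$, one chooses coefficients so that the amplifier multiplies $\phi$ by a factor of size $\gg L$; the resulting amplified pre-trace inequality then reduces bounding $|\phi(P)|^2$ to counting matrices $\gamma \in M_2(\OO)$ of determinant of controlled norm, bottom-left entry divisible by $N$, and hyperbolic displacement $d(\gamma P, P) \leq O(1/T)$.

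Since $N$ is squarefree, every Atkin--Lehner involution $W_d$ for $d \| N$ acts on the newform space with eigenvalue of absolute value one, so $|\phi(P)| = |\phi(W_d P)|$. A pigeonhole argument among these $V^{o(1)}$ representatives allows me to replace $P$ by a conjugate $P^{\ast}$ whose height $\Im(P^{\ast})$ is as large as the standard fundamental domain for $\SL_2(\OO)$ allows. This height gain sharpens the diophantine restrictions on any $\gamma$ contributing to the expanded pre-trace sum. I would then split the geometric sum into a parabolic piece (where the bottom-left entry $c$ vanishes, controlled by the cusp stabilizer) and a non-parabolic piece ($c \neq 0$), and handle the latter by a geometry-of-numbers argument over $\OO = \ZZ[i]$: the displacement constraint together with the conditions $N \mid c$ and $|\det(\gamma)|^2 \asymp L^2$ translates into real inequalities on the eight Gaussian-integer coordinates of the matrix entries, whose number of solutions is estimated using the large height of $P^{\ast}$.

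Combining the amplifier gain with these geometric counts produces an inequality whose two dominant contributions balance when $L$ is a suitable positive power of $V$, yielding the advertised saving $V^{-1/6}$ over the trivial bound $T(TV)^{\eps}$. The main obstacle is the geometric counting itself: the Gaussian lattice $M_2(\OO)$ is eight-dimensional over $\RR$, and the hyperbolic distance on $\HH^3$ entangles the real and imaginary parts of matrix entries through the noncommutative $\SL_2(\CC)$-action, so the diophantine constraints do not decouple as cleanly as in the surface case. Pushing the counts to the sharpness required for the exponent $1/6$, uniformly in $V$, is where the bulk of the technical work is concentrated.
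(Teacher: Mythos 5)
Your proposal follows essentially the same route as the paper: an amplifier built from $T_\ell$ and $T_{\ell^2}$ with the Hecke relation guaranteeing a gain of size $L^2$, an Atkin--Lehner normalization of the point, a geometry-of-numbers count of matrices over $\ZZ[i]$ split according to whether $c=0$, and optimization of the amplifier length as a power of $V$ (the paper takes $L\asymp |N|^{2/3}\asymp V^{1/3}$, balancing $T^2/L$ against $T^2L^2/|N|^2$). One correction: the Atkin--Lehner group $\Gamma_0^{\ast}(N)/\Gamma_0(N)$ has only $2^{\omega(N)}=V^{o(1)}$ elements while $[\SL_2(\OO):\Gamma_0(N)]\asymp|N|^2$, so you cannot move $P$ into the fundamental domain of $\SL_2(\OO)$ and obtain $\Im(P^{\ast})\gg 1$; what the normalization actually buys (Lemma~\ref{lem3}) is the lattice bound $\|cP+d\|^2\geq |N|^{-1}$ for all $(c,d)\neq(0,0)$, hence $r\gg|N|^{-1}$, and it is this bound on the first minimum of $\Lambda(P)=\{cP+d\}$, combined with $N\mid c$, that makes the count of pairs $(a-d,b)$ efficient. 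Also, besides the $c=0$ matrices one must separately treat the parabolic matrices with $c\neq 0$ and $(a-d)^2+4bc=0$, which arise for square determinants and are not controlled by the generic lattice count; the paper eliminates them by restricting to $r\leq L^{-1}(TV)^{-\eps}$ and covering the complementary range of $r$ with the Fourier-expansion bound of Lemma~\ref{fourier}.
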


This bound is of the same strength as the corresponding results for $\SL_2(\RR)$ \cite{HT2}, $\SO_3(\RR)$ \cite{BM}, $\SO_4(\RR)$ \cite{BM}, and probably as good as one can hope for with current technology. It is reasonable to expect in the volume aspect $\| \phi \|_{\infty} \ll_{T,\eps} V^{-1/2+\eps}$ (which would be optimal) for squarefree $N$, relative to which Theorem~\ref{thm1} is one-third along the way. In this sense, Theorem~\ref{thm1} matches the quality of classical Weyl-type subconvexity bounds, and is therefore a fairly natural result that seems hard to improve.

It is an equally interesting problem to establish a saving in the eigenvalue aspect. In this respect, we prove

\begin{theorem}\label{thm2} Let $N\in\OO$ be squarefree and $\phi$ an $L^2$-normalized Hecke--Maa{\ss} cuspidal newform on $\Gamma_0(N)\backslash \HH^3$ with spectral parameter $t$. Then for any $\eps > 0$ we have
\[\| \phi \|_{\infty} \ll_{\eps}  T^{5/6} (TV)^{\eps}. \]
\end{theorem}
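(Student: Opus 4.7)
The plan is to adapt the Iwaniec--Sarnak amplification method to the quaternionic setting of $\HH^3$, in parallel with Theorem~\ref{thm1} but with the parameter choices now tuned to save in the eigenvalue aspect rather than the volume. I would begin with a nonnegative Selberg test function $h$ concentrated near the spectral parameter $t$, with $h(t)\gg 1$: the associated point-pair invariant $k(P,Q)$ decays rapidly outside the hyperbolic ball $d(P,Q)\ll T^{-1+\eps}$, and, on account of the $t^2\,dt$ Plancherel measure on $\HH^3$, satisfies $k(P,P)\asymp T^2$, so that the unamplified pre-trace formula already recovers \eqref{trivial}.

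Next I would construct an amplifier of length $L$ from Hecke operators at Gaussian prime ideals $\ell\nmid N$ with $|\ell|^2\asymp L$. The Hecke recursion $\lambda_\ell(\phi)^2-\lambda_{\ell^2}(\phi)=1$ forces $\max(|\lambda_\ell(\phi)|,|\lambda_{\ell^2}(\phi)|)\geq 1/2$, so a signed choice $x_\ell,x_{\ell^2}\in\{0,\pm 1\}$ yields an amplifier $A=\sum_\ell(x_\ell T_\ell+x_{\ell^2}T_{\ell^2})$ with $A\phi=Y\phi$ and $|Y|\gg L^{1-\eps}$. Positivity on the spectral side of the pre-trace formula for $A^{\ast}A$ then gives, schematically,
\[
L^{2-\eps}\,|\phi(P)|^2 \ll \sum_{\ell_1,\ell_2}x_{\ell_1}\ov{x_{\ell_2}}\sum_{d\mid(\ell_1,\ell_2)}|d|^{2}\sum_{\gamma\in\M_{\ell_1\ell_2/d^2}(\OO)}k(P,\gamma P),
\]
with $\gamma$ ranging over cosets modulo $\Gamma_0(N)$. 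The identity-stabilizer contribution (coming from $\ell_1=\ell_2$ via the ``$+1$'' in the Hecke recursion) is, for $P$ of trivial stabilizer, of size $\ll L\cdot T^2$, contributing $T^2/L$ after dividing by $L^{2-\eps}$.

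The heart of the matter is the remaining count
\[
\mathcal{M}(N_0,P,T):=\sum_{1<|n|\leq N_0}\#\bigl\{\gamma\in\M_n(\OO):\,d_{\HH^3}(\gamma P,P)\leq T^{-1+\eps}\bigr\},
\]
which feeds $T^2\mathcal{M}(L^2,P,T)/L^2$ into the bound for $|\phi(P)|^2$. Writing $P=z+rj$ and $\gamma=\bigl(\begin{smallmatrix}a&b\\c&d\end{smallmatrix}\bigr)$ with $ad-bc=n$, the near-fixing condition translates into the ellipsoidal shell constraint $|cz+d|^2+|c|^2r^2=|n|\bigl(1+O(T^{-1})\bigr)$ on the bottom row, together with a pinning of the top row $(a,b)$ to a short coset of the $\OO$-module $\OO\cdot(c,d)\subset\OO^2$ dictated by the displacement $\gamma P-P$. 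Matching the target $|\phi(P)|^2\ll T^{5/3+\eps}$ selects $L=T^{1/3}$ and reduces the proof to the sparsity bound $\mathcal{M}(L^2,P,T)\ll L^2 T^{-1/3+\eps}$, i.e., most determinants $1<|n|\leq T^{2/3}$ must admit no $\gamma$ near-fixing $P$.

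The main obstacle is precisely this diophantine count in the noncommutative Gaussian setting. The bottom-row shell is now a real three-dimensional locus inside the rank-four integer lattice $\OO^2\cong\ZZ^4$ (as opposed to a one-dimensional shell in $\ZZ^2$ in the $\SL_2(\RR)$ case), and the two successive minima of the top-row sublattice $\OO\cdot(c,d)\subset\OO^2$ interact intricately with the geometry of $P$. I would treat this by splitting on $|c|$ and on the two successive minima of $(c,d)$ with respect to the positive-definite quadratic form $|cP+d|^2$: the degenerate case $c=0$ reduces to a diagonal/parabolic diophantine analysis in a maximal torus; for $|c|\geq 1$, geometry of numbers tuned to $|cP+d|^2$ bounds the admissible bottom rows per shell, after which the determinant equation $ad-bc=n$ combined with the near-fixing pin on $(a,b)$ gives a divisor-type count. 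Summing over $1<|n|\leq L^2=T^{2/3}$ and combining with the diagonal contribution $T^2/L=T^{5/3}$ delivers the claimed bound.
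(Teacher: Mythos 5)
Your overall architecture (amplifier built from first and second powers of split Gaussian primes, pre-trace formula with positivity, reduction to a lattice-point count near the stabilizer of $P$, choice $L=T^{1/3}$) matches the paper's. But there are two genuine gaps. First, the kernel $k$ attached to an admissible nonnegative $h$ with $h(t)\gg 1$ does \emph{not} decay rapidly outside the ball $d(P,Q)\ll T^{-1+\eps}$: it satisfies $k(u)\asymp T/\sqrt{u}$ throughout the intermediate range $T^{-2}\ll u\ll 1$ and only decays for $u\gg 1$. Consequently you cannot reduce to a single count $\mathcal{M}(L^2,P,T)$ in a ball of radius $T^{-1+\eps}$; you must bound the count for every dyadic $\delta$ up to $O(1)$ and weight it by $\min(T^2,T/\sqrt{\delta})$. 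In the paper this intermediate range produces the term $TL^2$ (from counts of shape $L^4\sqrt{\delta}$ and $L^6\sqrt{\delta}$ for determinant norms $\asymp L^2$ and $\asymp L^4$), and it is precisely this term, balanced against $T^2/L$, that forces $L=T^{1/3}$ and the exponent $5/6$. Your target bound is also misnormalized: scalar matrices $\ell_1\ell_2 I$ alone contribute $\asymp L^2$ to $\mathcal{M}(L^2,P,T)$, so $\mathcal{M}(L^2,P,T)\ll L^2T^{-1/3+\eps}$ is false as stated; the determinants of norm $\asymp L^2$ and $\asymp L^4$ carry different Hecke normalizations ($|l|^{-1}\asymp L^{-1}$ versus $L^{-2}$) and must be tracked separately.

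Second, and more seriously, the counting you sketch for the tiny ball (``geometry of numbers tuned to $|cP+d|^2$ per shell, then a divisor-type count from $ad-bc=n$'') reproduces only the generic bounds, which for determinants of norm $\asymp L^4$ give at best $O(L^4)$ matrices in the ball $\delta\leq T^{-2}$ --- with $L=T^{1/3}$ this is $T^{4/3}$, whereas closing the argument requires $\preccurlyeq L^3=T$. The missing ingredients are the heart of the paper's proof: (i) for $\delta\leq\kappa L^{-4}$ and $l=\lambda^2$ a square of a product of split primes, the trace is forced to satisfy $a+d=n\lambda$ with $n\in\ZZ$ bounded (an area/arithmetic argument in $\ZZ[i]$); and (ii) a Dirichlet approximation $Nz=p/q+O(1/(|q|Q))$ with two complementary counts --- one parametrizing matrices by the pair $(\xi,\lambda)$ with $\xi=2c'p-aq+dq$ when $q$ is small, the other counting pairs $(c,a)$ via a congruence-lattice argument when $q$ is large --- whose combination yields $\preccurlyeq L^6\sqrt{\delta}+L^5Q^{-1}$ and hence $\preccurlyeq L^3$ after choosing $Q=L^2$. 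You also need the separate treatment of parabolic matrices ($(a-d)^2+4bc=0$), which vanish for $\delta\ll \LL^{-1/2}r^{-2}$ but would otherwise spoil the divisor argument. Without these steps the proof does not close at the exponent $5/6$.
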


The problem of obtaining a nontrivial bound in the situation of Theorem~\ref{thm2} was addressed by Koyama in the paper \cite{Ko}\footnote{Unfortunately, the argument in this work seems to have a gap, specifically the proof of \cite[Lemma~5.3]{Ko} is incomplete. Our counting argument is quite different from Koyama's method, though at one point we incorporate a crucial idea from his paper (see Section~\ref{TinyDistancesHighRange}).}. Our bound in Theorem~\ref{thm2} is of the same strength as the corresponding results for $\SL_2(\RR)$ \cite{IS} and $\SO_3(\RR)$ \cite{VdK}, even though the corresponding counting problems differ substantially; the fact that they all meet at $5/6$ of the trivial bound seems to indicate that this is the natural exponent in the eigenvalue aspect, and it is probably as good as one can hope for with current technology.
We also remark that the sup-norm estimate in Theorem~\ref{thm2} is uniform over the entire manifold $\Gamma_0(N)\backslash\HH^3$, including the cuspidal regions, where high-energy eigenstates  exhibit a sizable ``bump'' for purely analytic reasons~\cite{Sa}.

Theorems~\ref{thm1} and~\ref{thm2} can be combined to a \emph{hybrid} bound, which saves simultaneously in both aspects.

\begin{theorem}\label{thm3} Let $N\in\OO$ be squarefree and $\phi$ an $L^2$-normalized Hecke--Maa{\ss} cuspidal newform on $\Gamma_0(N)\backslash \HH^3$ with spectral parameter $t$. Then for any $\eps > 0$ we have
\[\| \phi \|_{\infty} \ll_{\eps} (TV^{1/2})^{-1/9} T (TV)^{\eps}.\]
\end{theorem}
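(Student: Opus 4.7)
The plan is to obtain Theorem~\ref{thm3} as an immediate consequence of Theorems~\ref{thm1} and~\ref{thm2} via a weighted geometric-mean interpolation. Since $\|\phi\|_\infty\geq 0$, for any $\theta\in[0,1]$ we have the trivial identity
\[
\|\phi\|_\infty=\|\phi\|_\infty^{1-\theta}\cdot\|\phi\|_\infty^{\theta}.
\]
Bounding the first factor by the volume-aspect estimate of Theorem~\ref{thm1} and the second by the eigenvalue-aspect estimate of Theorem~\ref{thm2} yields
\[
\|\phi\|_\infty\ll_{\eps}\bigl(TV^{-1/6}\bigr)^{1-\theta}\bigl(T^{5/6}\bigr)^{\theta}(TV)^{\eps}
=T^{1-\theta/6}\,V^{-(1-\theta)/6}\,(TV)^{\eps}.
\]
To match the target shape $T(TV^{1/2})^{-1/9}=T^{8/9}V^{-1/18}$, the unique admissible choice is $\theta=2/3$: then $1-\theta/6=8/9$ and $(1-\theta)/6=1/18$. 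This is exactly Theorem~\ref{thm3} after reabsorbing the $\eps$.

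In this approach there is no genuine analytic obstacle: the argument uses only the non-negativity of $\|\phi\|_\infty$ and the two previously established bounds, and the verification of the optimal weight is a one-line calculation. A more ambitious route would be to run a single amplified pre-trace formula with amplifier length $L$ chosen as a joint function of $T$ and $V$, optimized against a counting estimate that simultaneously exploits the $N\mid c$ congruence (the source of the $V^{-1/6}$ saving behind Theorem~\ref{thm1}) and the confinement to a geodesic ball of radius $\sim T^{-1}$ (the source of the $T^{-1/6}$ saving behind Theorem~\ref{thm2}). Any potential improvement over the interpolation exponent in Theorem~\ref{thm3} would have to come from such a unified joint-counting input rather than from cleverly arranging the amplifier; for the statement as given, however, the elementary interpolation above is entirely sufficient.
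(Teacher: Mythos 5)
Your proposal is correct and is essentially identical to the paper's own proof, which also obtains Theorem~\ref{thm3} by the weighted geometric-mean interpolation $\|\phi\|_\infty \preccurlyeq T\min(V^{-1/6},T^{-1/6}) \leq T(V^{-1/6})^{1/3}(T^{-1/6})^{2/3}$ between Theorems~\ref{thm1} and~\ref{thm2}. Your choice $\theta=2/3$ matches the paper's exponents exactly.
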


\bigskip

We comment briefly on the methods employed and some of the principal features and difficulties encountered in the proofs of our main theorems. The general approach to sup-norms of automorphic forms that we employ is originally due to Iwaniec--Sarnak~\cite{IS}: it is based on an amplified pre-trace formula (Section~\ref{amplifiedsection} below) and is more or less identical in all treatments of this subject. Applying the amplified pre-trace formula with a suitable nonnegative test function and dropping all but one term leads to a diophantine problem on the geometric side. It is the treatment of this diophantine problem, which we present in Sections~\ref{DiophantineInequailitiesSection}--\ref{TinyDistancesSection}, that lies at the heart of the method, and this depends heavily on the considered manifolds. In the present setting, one has to count matrices $\gamma \in \M_2(\OO)$ (with determinant suitably bounded in terms of $T$ and $V$) that are very close to the stabilizer of the point $P \in \Gamma_0(N)\backslash \HH^3$ at which we want to bound $\phi$ (eigenvalue aspect) and have their lower left entry divisible by $N$ (volume aspect). The former condition leads to a complicated system of diophantine inequalities, while the latter condition can be exploited by methods from the geometry of numbers. A crucial role in our arguments in both aspects is played by our adaptation to the context of $\OO$-modules of Minkowski's lattice point counting argument in Lemma~\ref{lem3} and our ability to apply it in several different ways for various ranges of the parameters.

We mention some of the unique features and difficulties encountered when moving from $\SL_2(\RR)$ to $\SL_2(\CC)$. On a technical level, the action of $\GL_2(\CC)$ (which is the group where the Hecke operators live) on the upper half space $\HH^3$ is less convenient to work with than the action of $\GL_2(\RR)$ on the upper half plane $\HH^2$. This is reflected by the fact that arithmetic on $\HH^2$ (viewed as a subset of complex numbers) is commutative, while on $\HH^3$ (viewed as a subset of Hamiltonian quaternions) it is not; thus, compared to the commutative case, it is much harder to exploit diophantine conditions such as \eqref{BasicDiophantineConditions}--\eqref{third} in the context of an efficient lattice point count without fixing too many parameters. On a more conceptual level, it is clear from the above description that the sup-norm problem is highly sensitive to the structure of the stabilizer of a point. On $\HH^2$, the stabilizer is conjugate to $\SO_2(\RR)$, a commutative group homeomorphic to $S^1$. It is not hard to see (this being essentially a binary quadratic problem) that there cannot be too many lattice points on a (possibly deformed) circle. On $\HH^3$,  the stabilizer is conjugate to $\SU_2(\CC)$, a non-commutative group homeomorphic to $S^3$. It is much harder to see that on this bigger variety there are relatively few lattice points (this being essentially a quaternary quadratic problem). As a matter of fact, there are cases when this variety contains too many lattice points to allow an efficient matrix count, and (as remarked earlier) this feature is manifested by the power growth of eigenforms at distinguished points. We choose our amplifier in a way that avoids such situations.

In the eigenvalue aspect, where extremely delicate diophantine analysis is encountered in very small neighborhoods, some of the estimates are sensitive to the diophantine properties of $P = z + rj$, and we use different methods according to whether $z$ is ``well approximable" or ``badly approximable". This dichotomy is reminiscent of the method in \cite{BHo}, and we gain substantially by approximating genuinely in $K$.

Finally, we mention that this paper uses, for the first time, a slightly more optimized amplifier \eqref{ampli} that needs only second and fourth powers, but not first and third powers coming from mixed terms. This avoids some technical difficulties.

\begin{notation} Our two principal parameters are the spectral parameter $T$ and the volume parameter $V$. For convenience, we borrow from \cite{HT2} the notation
\begin{equation}\label{notationformula}
X \preccurlyeq Y \qquad \overset{\text{def}}\Longleftrightarrow \qquad X \ll_\eps Y(TV)^{\eps}
\end{equation}
for any two quantities $X$ and $Y$. This notation will be in force for the rest of the paper.
\end{notation}

\begin{acknowledgement} We thank Peter Sarnak for valuable discussions on the presentation in this paper and the referees for a careful reading of the manuscript.
\end{acknowledgement}

\section{Groups and matrices}

Recall the description of the upper half space $\HH^3$ in terms of the Hamiltonian quaternions as above \eqref{parts}. The group $\SL_2(\CC)$ acts on $\HH^3$ by
\begin{equation}\label{action}
g P = (aP+b)(cP+d)^{-1}, \qquad g =\begin{pmatrix} a & b\\c & d\end{pmatrix} \in \SL_2(\CC),
\end{equation}
where the inverse and the multiplication are performed in the quaternion division algebra. This action factors through $\text{Isom}^{+}(\HH^3)\cong\PSL_2(\mathbb{C})\cong\PGL_2(\mathbb{C})$ and in turn lifts uniquely to an action of $\GL_2(\CC)$ on $\HH^3$ determined by \eqref{action} and the fact that its center acts trivially:
\begin{equation}\label{action3}
\begin{pmatrix} a & \\ & a\end{pmatrix}P = P, \qquad a \in \CC^{\times}.
\end{equation}
We note that the formula in \eqref{action} is valid when $\det g>0$, but fails otherwise. At any rate, we have the general relation (cf.\ \eqref{parts})
\begin{equation}\label{ratio}
\frac{\Im (g P)}{\Im(P)} = \frac{|\det g|}{\|cP+d\|^2}=\frac{ |\det g|}{|cz + d|^2 + |cr|^2},\qquad g \in \GL_2(\CC),
\end{equation}
where $\| P \| := (P\ov{P})^{1/2}$ is the usual quaternionic norm, and the absolute value is the usual absolute value of complex numbers (not to be confused with the number field norm of $K$). See \cite{EGM} for more details.

For $n \in \OO\setminus\{0\}$ we write
\[R(n)  := \left\{\begin{pmatrix} a & b\\c & d\end{pmatrix} \in \M_2(\OO) : N \mid c,\ (a, N) = 1,\ ad-bc = n\right\}.\]
In particular, $R(1) = \Gamma_0(N)$. We define the Hecke operator $T_n$ on functions $\phi:\Gamma_0(N)\backslash\HH^3\to\CC$ by
\begin{equation}\label{hop}
(T_n \phi)(P ) := \frac{1}{|n|} \sum_{g \in R(1) \backslash R(n)} \phi(gP)
= \frac{1}{4|n|}\sum_{\substack{ad=n\\ (a, N) = 1}}\,\sum_{b\mod d}\phi\left(\begin{pmatrix}a & b\\0 & d\end{pmatrix}P\right).
\end{equation}
In this normalization, the Ramanujan conjecture asserts that the spectrum of $T_n$ acting on cusp forms is bounded by $O_\eps(|n|^{\eps})$.
Note that the matrices $\left(\begin{smallmatrix}a & b\\0 & d\end{smallmatrix}\right)$ on the right hand side represent each coset in $R(1) \backslash R(n)$ exactly $4$ times, as follows by adapting the proof of \cite[Prop.~3.36]{Shi}. Note also that for $(n,N)=1$ the condition $(a,N)=1$ is automatic. The Hecke operators for $n$ not coprime with $N$ are only used in Section~\ref{trivialboundsection}.

It is clear from the definition that $T_i$ is the involution induced by the rotation by $\pi/2$ around the $r$-axis, and $T_iT_n=T_{in}$ for any $n \in \OO\setminus\{0\}$. In particular, $T_{-n}=T_n$. The Hecke operators commute with each other and the Laplace operator, and they satisfy the multiplicativity relation
\begin{equation}\label{hecke}
T_mT_n = \sum_{\substack{(d)|(m,n)\\(d,N)=1}} T_{mn/d^2},\qquad m,n\in \OO\setminus\{0\},
\end{equation}
where the sum is over the ideals in $\ZZ[i]$ coprime with $N$ that divide $m$ and $n$. This relation follows by adapting the proof of \cite[Theorem~6.6]{Iw}, but see also \cite[Corollary~4.3]{Zh} for the analogous result over ideals. Finally, we remark that
the Hecke operators $T_n$ for $(n, N) = 1$ are self-adjoint on $L^2(\Gamma_0(N)\backslash\HH^3)$, as follows by adapting the proof of
\cite[Theorem~6.20]{Iw}.

Following \cite{HT1}, we consider (for $N\in\OO$ squarefree)
\begin{equation}\label{normalizer}
\Gamma_0^{\ast}(N):=\left\{\begin{pmatrix} Ma & b\\Mc & Md\end{pmatrix} \in \PGL_2(\CC):
a, b, c, d \in \OO,\ M\mid N\mid Mc,\ Mad-bc=1\right\};
\end{equation}
see also \cite[Section~5.3]{Li}. This set is a discrete subgroup of $\PGL_2(\CC)$ containing (the image of) $\Gamma_0(N)$ as a normal subgroup.
The quotient group $\Gamma_0^{\ast}(N)/\Gamma_0(N)$ is isomorphic to $(\ZZ/2\ZZ)^{\omega(N)}$, where $\omega(N)$ is the number of prime ideal factors
of $N$. The action of $\Gamma_0^{\ast}(N)$ commutes with the Hecke operators $T_n$ for $(n,N)=1$, hence an element of $\Gamma_0^{\ast}(N)$ acts by $\pm 1$ on every newform $\phi$ for $\Gamma_0(N)$, by multiplicity one. For the purpose of estimating $|\phi(P)|$ we may therefore restrict the point $P\in\HH^3$ to any fundamental domain of $\Gamma_0^{\ast}(N) \backslash \HH^3$. We shall work with the analogue of the Ford polygon (see \cite[Section~2.3]{Iw}):
\begin{equation}\label{fund}
\FF(N):=\left\{P=z+rj\in\HH^3:\text{$\Im(P)\geq\Im(gP)$ for all $g \in \Gamma^{\ast}_0(N)$},\
|\re(z)|\leq 1/2,\ 0\leq \im(z) \leq 1/2\right\}.
\end{equation}
The next section shows that this fundamental domain has very convenient properties.

\section{Geometry of numbers}

We start our discussion with a result in the geometry of numbers that goes back to Minkowski. For any lattice $\Lambda\subset\RR^n$, it is easy to see that the lattice minimum
\[m_1:=\min\left\{\|x-y\|:\text{$x,y\in\Lambda$ and $x\neq y$}\right\}\]
is positive and that (by a volume argument) the number of lattice points inside any ball of radius $R>0$ is $\ll_n 1+(R/m_1)^n$. The following result is an essentially sharp refinement of this count that takes into account the finer shape of $\Lambda$. It is a kind of Lipschitz principle, also implicit in the corresponding diophantine analysis for the sup-norm problem on $\SO_3(\RR)$ \cite[Section~4]{BM}.

\begin{lemma}\label{lem2} Let $\Lambda \subset \RR^n$ be a lattice with successive minima $m_1 \leq m_2 \leq \dots \leq m_n$. Let $B\subset\RR^n$ be a ball of radius $R>0$ and arbitrary center. Then
\[\#(\Lambda \cap B) \ll_n 1 + \frac{R}{m_1} + \frac{R^2}{m_1m_2} + \dots + \frac{R^n}{m_1 m_2\cdots m_n}.\]
\end{lemma}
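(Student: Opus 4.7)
The plan is to reduce the ball-counting problem to counting integer points in a product of intervals, using a basis of $\Lambda$ whose Gram--Schmidt lengths are comparable to the successive minima.

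First I would invoke the classical basis-reduction lemma (going back to Mahler, and also provided by LLL-reduction): for any lattice $\Lambda\subset\RR^n$ with successive minima $m_1\le\cdots\le m_n$, there exists a $\ZZ$-basis $v_1,\ldots,v_n$ of $\Lambda$ with $\|v_i\|\ll_n m_i$ for every $i$. Note that the successive-minima vectors themselves need not form a basis once $n\ge 5$, but any such basis suffices. Let $v_1^\ast,\ldots,v_n^\ast$ denote its Gram--Schmidt orthogonalization. Since $\prod_i\|v_i^\ast\|=\vol(\RR^n/\Lambda)\asymp_n\prod_i m_i$ by Minkowski's second theorem, and since $\|v_j^\ast\|\le\|v_j\|\ll_n m_j$, we deduce
\[
\|v_j^\ast\|=\frac{\vol(\RR^n/\Lambda)}{\prod_{i\ne j}\|v_i^\ast\|}\gg_n m_j,\qquad j=1,\ldots,n,
\]
so the Gram--Schmidt lengths match the successive minima up to constants depending only on $n$.

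Next I would parametrize a lattice point $x\in\Lambda$ by its integer coordinates $c_1,\ldots,c_n$ in the basis $\{v_i\}$. Writing $v_i=v_i^\ast+\sum_{j<i}\mu_{ij}v_j^\ast$ yields
\[
x=\sum_{j=1}^n\Bigl(c_j+\sum_{i>j}\mu_{ij}c_i\Bigr)v_j^\ast,
\]
and projecting the condition $x\in B$ onto the orthonormal direction $v_j^\ast/\|v_j^\ast\|$ confines $c_j+\sum_{i>j}\mu_{ij}c_i$ to an interval of length $2R/\|v_j^\ast\|$ determined by the center of $B$. Fixing $c_n,c_{n-1},\ldots,c_{j+1}$ in decreasing order, this forces $c_j$ into at most $1+2R/\|v_j^\ast\|\ll_n 1+R/m_j$ integer values. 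Multiplying over $j$ gives
\[
\#(\Lambda\cap B)\ll_n\prod_{j=1}^n\Bigl(1+\frac{R}{m_j}\Bigr)=\sum_{S\subseteq\{1,\ldots,n\}}\prod_{j\in S}\frac{R}{m_j}.
\]

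Finally, since $m_1\le m_2\le\cdots\le m_n$, any subset $S$ of size $k$ satisfies $\prod_{j\in S}m_j\ge m_1 m_2\cdots m_k$, so $\prod_{j\in S}R/m_j\le R^k/(m_1\cdots m_k)$; grouping the $2^n$ subsets by cardinality absorbs the combinatorial constants into the implicit $\ll_n$ and yields exactly the stated bound. I expect the main technical subtlety to lie in the first step, namely producing a basis whose lengths (and hence Gram--Schmidt lengths) are controlled term-by-term by the successive minima with only $n$-dependent losses; once this reduced basis is in hand, the rest is a transparent sweep over the triangular Gram--Schmidt coordinates together with an elementary combinatorial cleanup.
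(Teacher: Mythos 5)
Your proof is correct, but it follows a genuinely different route from the paper's. The paper argues by packing: after fixing one point of $\Lambda\cap B$, the differences lie in the ball $B'$ of radius $2R$ about the origin; letting $V$ be the $k$-dimensional span of $\Lambda\cap B'$, a fundamental parallelepiped of $\Lambda\cap V$ has $k$-volume $\gg_k m_1\cdots m_k$ by Minkowski's second theorem, and its translates by the points of $\Lambda\cap B'$ are disjoint and contained in a ball of radius $O_k(R)$ in $V$, giving the single term $R^k/(m_1\cdots m_k)$ directly (this is the argument of Schmidt and Betke--Henk--Wills that the paper cites). You instead pass through a Minkowski/Mahler-reduced basis with $\|v_i\|\ll_n m_i$ --- correctly noting that the minima vectors themselves need not form a basis for $n\geq 5$ --- and sweep the Gram--Schmidt coordinates from $c_n$ down to $c_1$, obtaining the product $\prod_j(1+O(R/m_j))$ and then cleaning up combinatorially via the monotonicity of the $m_j$. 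Your steps all check out: the lower bound $\|v_j^\ast\|\gg_n m_j$ follows as you say from $\prod_i\|v_i^\ast\|=\vol(\RR^n/\Lambda)\asymp_n\prod_i m_i$ together with $\|v_i^\ast\|\leq\|v_i\|\ll_n m_i$, and the triangular structure of the Gram--Schmidt change of basis makes the downward sweep legitimate. The trade-off is that your argument requires the (classical but nontrivial) existence of a reduced basis as an external input, whereas the paper's needs only Minkowski's second theorem; on the other hand, your method is more explicit and identifies which coordinate directions contribute, while the packing argument isolates only the dominant term determined by $\dim\,\mathrm{span}(\Lambda\cap B')$. Both are standard proofs of this Lipschitz-type counting principle, and either suffices for the applications in the paper.
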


\begin{proof} This is essentially \cite[Lemma~2]{Sch} or \cite[Prop.~2.1]{BHW}, but for sake of completeness we provide the proof. If $\#(\Lambda \cap B)\leq 1$, then we are done. Otherwise, let us fix a point $\tilde u\in \Lambda \cap B$, then any other point $u\in \Lambda \cap B$ is determined by the difference $u':=u-\tilde u\in\Lambda$. The ball $B$ has diameter $2R$, hence $u'\in\Lambda\cap B'$, where $B'$ is the ball of radius $2R$ centered at the origin. By assumption, the subspace $V\subset\RR^n$ generated by $\Lambda \cap B'$ has positive dimension $1\leq k\leq n$, and $\Lambda\cap V$ is a $k$-dimensional lattice in $V$ with a fundamental parallelepiped lying in the $k$-ball $kB'\cap V$. The fundamental parallelepiped has $k$-volume $\asymp_k m_1m_2\cdots m_k$ by a theorem of Minkowski~\cite[Theorem 3 on p.~124]{GL}, and its translates by the elements of $\Lambda\cap B'$ are pairwise disjoint and lie in the $k$-ball $(k+1)B'\cap V$ whose $k$-volume is $\asymp_k R^k$. Therefore
\[\#(\Lambda \cap B) \leq \#(\Lambda \cap B') \ll_k R^k/(m_1 m_2\cdots m_k),\]
and we are done again.
\end{proof}

We identify the $4$-dimensional $\RR$-vector space of Hamiltonian quaternions with $\RR^4$ via the standard basis $\{1,i,j,k\}$; then, the quaternion norm agrees with the Euclidean norm in $\RR^4$. For an arbitrary point $P\in\FF(N)$ lying in the fundamental domain given by \eqref{fund}, we consider the $\ZZ$-lattice
\[\Lambda(P) := \{cP+d: c,d \in \OO\}. \]
The next lemma shows how Lemma~\ref{lem2} and the restriction to $P\in\FF(N)$ lead to a very efficient count of lattice points in $\Lambda(P)$ that will be of great use for us in various diophantine situations.

\begin{lemma}\label{lem3} Let $P = z + rj \in \FF(N)$ for $N$ squarefree. Then the lattice $\Lambda(P)$ and its successive minima
$m_1\leq m_2\leq m_3\leq m_4$ satisfy:
\begin{itemize}
\item[a)] $m_1m_2m_3m_4 \asymp r^2$;
\item[b)] $m_1 \geq |N|^{-1/2}$ and $r\gg |N|^{-1}$;
\item[c)] $m_1=m_2$ and $m_3=m_4$;
\item[d)] in any ball of radius $R$ the number of lattice points is $\ll 1+R^2|N|+R^4r^{-2}$.
\end{itemize}
\end{lemma}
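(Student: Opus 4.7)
My plan attacks the four parts in the order (c) and (a) first (structural), then (b) (the deep one), then (d) (a direct application of Lemma~\ref{lem2}).

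Foundations for (a) and (c). A $\ZZ$-basis for $\Lambda(P)$ is $\{1, i, P, iP\}$; writing these in the standard quaternion basis $\{1, i, j, k\}$ as $(1,0,0,0)$, $(0,1,0,0)$, $(x, y, r, 0)$, $(-y, x, 0, r)$ (using $z = x + yi$ and $iP = iz + rk$), the coordinate determinant is $r^2$, so $\mathrm{covol}(\Lambda(P)) = r^2$. Minkowski's second theorem immediately gives (a). For (c), associativity of the Hamiltonian quaternions together with $ic, id \in \OO$ gives $i(cP + d) = (ic)P + id \in \Lambda(P)$, so $\Lambda(P)$ is invariant under left multiplication by $i$. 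This is an orthogonal transformation $T$ of $\HH \cong \RR^4$ with $T^2 = -I$ and hence no real eigenvalues; for any nonzero $v \in \Lambda(P)$, $\langle v, Tv \rangle = \langle Tv, T^2 v\rangle = -\langle v, Tv\rangle$, so $v \perp Tv$. A vector $v$ realizing $m_1$ thus has an orthogonal partner $Tv \in \Lambda(P)$ of the same length, forcing $m_2 \leq m_1$, hence $m_1 = m_2$; the same argument in the $T$-invariant orthogonal complement of $\mathrm{span}_\RR(v, Tv)$ yields $m_3 = m_4$.

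Part (b). Given nonzero $(c, d) \in \OO^2$, factor out $\gcd(c, d)$ to reduce to the coprime case; the subcase $c = 0$ is trivial ($\|d\|^2 \geq 1$). For $c \neq 0$ coprime to $d$, set
\[M := \prod_{\substack{\pi \mid N\\ \pi \nmid c}} \pi,\]
a divisor of $N$; this is where squarefreeness is essential. By construction $\gcd(M, c) = 1$ and $N/M \mid c$, which together with $\gcd(c,d) = 1$ yields $\gcd(Md, c) = 1$, so Bezout in $\OO$ provides $a, b \in \OO$ with $Mad - bc = 1$. The matrix $g = \bigl(\begin{smallmatrix} Ma & b \\ Mc & Md \end{smallmatrix}\bigr)$ then lies in $\Gamma_0^*(N)$ by \eqref{normalizer} and has $\det g = M$. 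By \eqref{ratio}, $\Im(gP)/\Im(P) = 1/(|M|\,\|cP + d\|^2)$, so the condition $\Im(gP) \leq \Im(P)$ coming from $P \in \FF(N)$ forces $\|cP + d\|^2 \geq 1/|M| \geq 1/|N|$, proving $m_1 \geq |N|^{-1/2}$. The second half of (b) is immediate from (a) and (c): since $m_3 \geq m_1$ and $m_1 m_3 \asymp r$ by (a), we get $r \gg m_1^2 \geq |N|^{-1}$.

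Part (d). Applying Lemma~\ref{lem2} with $n = 4$ and invoking (a), (c) (so $m_2 = m_1$, $m_4 = m_3$, $m_1 m_3 \asymp r$) gives
\[\#(\Lambda(P) \cap B) \ll 1 + \frac{R}{m_1} + \frac{R^2}{m_1^2} + \frac{R^3}{m_1^2 m_3} + \frac{R^4}{r^2}.\]
From (b), $m_1^{-1} \leq |N|^{1/2}$ and $m_1^2 m_3 \asymp m_1 r \geq r/|N|^{1/2}$, so the middle three terms are bounded by $R|N|^{1/2}$, $R^2|N|$, and $\ll R^3|N|^{1/2}/r$. AM-GM in the forms $R|N|^{1/2} = \sqrt{1 \cdot R^2|N|}$ and $R^3|N|^{1/2}/r = \sqrt{R^2|N| \cdot R^4/r^2}$ absorbs these into $1 + R^2|N| + R^4/r^2$, completing (d).

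The only substantive difficulty is part (b): the subgroup $\Gamma_0(N)$ alone controls only pairs $(c, d)$ with $N \mid c$, so to bound $\|cP + d\|$ for arbitrary coprime $(c, d)$ we must exploit the full Atkin-Lehner normalizer $\Gamma_0^*(N)$ and match the divisor $M \mid N$ to the prime factorization of $c$ relative to $N$. The squarefreeness of $N$ is precisely what makes the decomposition $N = M \cdot (N/M)$ with $\gcd(M, c) = 1$ and $N/M \mid c$ simultaneously available.
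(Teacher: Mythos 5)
Your proposal is correct in substance and follows essentially the same route as the paper on all four parts: the covolume computation for the basis $\{1,i,P,iP\}$ plus Minkowski's second theorem for (a); the Atkin--Lehner-type matrix $\bigl(\begin{smallmatrix} Ma & b\\ Mc & Md\end{smallmatrix}\bigr)\in\Gamma_0^*(N)$ with $M=N/(N,c)$ for (b) (this is exactly the argument of \cite[Lemma~2.2]{HT1} that the paper follows); the $\OO$-module structure of $\Lambda(P)$ for (c); and Lemma~\ref{lem2} together with the geometric-mean absorption of the odd-indexed terms for (d). The one step that needs tightening is your justification of $m_3=m_4$: the phrase ``the same argument in the $T$-invariant orthogonal complement of $\mathrm{span}_\RR(v,Tv)$'' does not literally go through, because $\Lambda(P)\cap W^\perp$ (with $W=\mathrm{span}_\RR(v,Tv)$) need not contain any nonzero vector of norm $\leq m_3$ --- indeed it can be the zero lattice --- so there is no ``partner pair'' waiting in $W^\perp$. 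The correct patch stays within your framework: among three independent lattice vectors of norm $\leq m_3$ at least one, say $u$, lies outside the $2$-plane $W$; then $v,Tv,u,Tu$ all have norm $\leq m_3$ and must be $\RR$-independent, since otherwise their span would be a $3$-dimensional $T$-invariant subspace, which is impossible as $T^2=-I$ has no real eigenvalues (equivalently, every $T$-invariant subspace is a complex, hence even-dimensional, subspace). This is precisely the paper's argument, which phrases the same dimension-parity obstruction by saying that a $3$-dimensional $\QQ$-span cannot be a $K$-vector space.
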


\begin{proof} a) Calculating the exterior product of the $\ZZ$-basis $\{1,i,P,iP\}$ shows that $\Lambda(P)$ has covolume $r^2$.
Hence $m_1m_2m_3m_4 \asymp r^2$ follows from Minkowski's theorem~\cite[Theorem 3 on p.~124]{GL}.

b) We follow the proof of \cite[Lemma~2.2]{HT1}. The bound $m_1 \geq |N|^{-1/2}$ is equivalent to the statement that $\|cP + d\|^2\geq |N|^{-1}$ for any $(c,d)\in\OO^2$ distinct from $(0,0)$. Without loss of generality, we may assume that $c$ and $d$ are coprime. As $N$ is squarefree, $M := N/(N, c)$ is coprime to $c$, hence there exist $a,b\in\OO$ such that $Mad-bc = 1$. The matrix $g = \left(\begin{smallmatrix} Ma & b\\ Mc & Md\end{smallmatrix}\right)$ lies in
$\Gamma_0^{\ast}(N)$, hence by \eqref{ratio} and \eqref{fund},
\[1 \geq \frac{\Im(gP)}{\Im(P )} = \frac{|M|}{\| McP + Md\|^2} \geq \frac{|N|^{-1}}{\|cP + d\|^2}.\]
Therefore, $m_1 \geq |N|^{-1/2}$. By part a) we infer that $r^2\gg |N|^{-2}$ and so $r\gg |N|^{-1}$.

c) By the definition of $m_3$, there are three $\ZZ$-independent vectors $u_1, u_2, u_3 \in \Lambda(P)$ with norms at most $m_3$. Write $V := \QQ u_1 + \QQ u_2 + \QQ u_3$. Then $V$ is a $3$-dimensional $\QQ$-vector space, but not a $K$-vector space (since $3$ is odd). Therefore, $Ku_1 + Ku_2 + Ku_3$ is strictly larger than $V$. Since $K=\QQ+\QQ i$, at least one element from $\{iu_1,iu_2,iu_3\}$ lies outside $V$. Adding this element to $\{u_1,u_2,u_3\}$, we obtain four $\ZZ$-independent vectors in $\Lambda(P)$ with norms at most $m_3$. This shows that $m_3\leq m_4\leq m_3$. The proof of $m_1=m_2$ is similar.

d) By Lemma~\ref{lem2}, the number of lattice points in any ball of radius $R$ is
\[\ll 1 + \frac{R}{m_1} + \frac{R^2}{m_1m_2} + \frac{R^3}{m_1m_2m_3} + \frac{R^4}{m_1m_2m_3m_4}.\]
By part c), we can drop the second and fourth term, because they are the geometric means of their neighbors. Estimating the remaining terms using parts a) and b), the above expression is $\ll 1+R^2|N|+R^4r^{-2}$, as stated.
\end{proof}

\section{Fourier expansion and trivial bound}\label{trivialboundsection}

In this section, we establish a ``trivial bound" for an $L^2$-normalized Hecke--Maa{\ss} cuspidal newform using Rankin--Selberg theory and the Fourier expansion.

An $L^2$-normalized cusp form $\phi$ on $\Gamma_0(N) \backslash \HH^3$ with Laplacian eigenvalue $\lambda = 1 + t^2$  has a Fourier expansion
\[\phi(z + rj ) = r\sum_{n \in \OO\setminus\{0\}} \rho(n) K_{it}(2 \pi |n| r)e(\re(nz)),\]
where $\rho(-n)=\rho(n)$ holds by the invariance of $\phi$ under
$\left(\begin{smallmatrix}-1 & 0\\0 & 1\end{smallmatrix}\right)=\left(\begin{smallmatrix}i & 0\\0 & i\end{smallmatrix}\right)\left(\begin{smallmatrix}i & 0\\0 & -i\end{smallmatrix}\right)$.
This Fourier expansion is based on the nondegenerate $\RR$-bilinear form $(w,z)\mapsto\re(wz)$ as opposed to the standard scalar product $(w,z)\mapsto\langle w,z\rangle$ of $\CC=\RR^2$ used by \cite[Section~3.3]{EGM}. That is, we specialize to our setting the general Fourier--Whittaker expansion from the theory of automorphic forms on $\GL_2$, in order to be compatible with the Hecke operators (cf.\ \cite{Ma,Sz,Zh}). Indeed, adapting the proof of \cite[Prop.~6.3]{Iw}, it follows now for any $n \in \OO\setminus\{0\}$ that
\[(T_n\phi)(z+rj)=r\sum_{m \in \OO\setminus\{0\}} \Biggl(\sum_{\substack{(d)|(m,n)\\(d,N)=1}} \rho\left(\frac{mn}{d^2}\right)\Biggr) K_{it}(2 \pi |m| r)e(\re(mz)).\]
This formula is closely related to the multiplicativity relation \eqref{hecke}.

Let us also assume that $\phi$ is a newform, then the previous identity implies
\[\rho(n)=\rho(1)\lambda(n),\qquad n \in \OO\setminus\{0\},\]
where $\lambda(n)$ is the $n$-th Hecke eigenvalue of $\phi$. By Rankin--Selberg theory combined with (an extension of) a famous bound of Hoffstein--Lockhart~\cite{HL} and a famous trick of Iwaniec~\cite[(19)]{Iw2} one shows that (cf.\ \cite[Section~2]{Ko}, \cite[Section~6.1]{EGM}, \cite[Prop.~3.2]{Ma})
\[|\rho(1)|^2\preccurlyeq|\rho(1)|^2\,\res_{s=1}L(s,\phi\otimes \tilde\phi)
\preccurlyeq\res_{s=1}\sum_{n \in \OO\setminus\{0\}}\frac{|\rho(n)|^2}{|n|^{2s}}
\ll\frac{1}{|\Gamma(1 + it)|^2 V}\ll\cosh(\pi t)\frac{1}{TV},\]
and also that
\[\sum_{1\leq |n| \leq x} |\lambda(n)|^2 \preccurlyeq x^2,\qquad x\in\RR.\]
Here we used the notations \eqref{Tdef} and \eqref{notationformula}. These relations imply
\begin{equation}\label{RS}
\sum_{1\leq |n| \leq x} |\rho(n)|^2 =|\rho(1)|^2\sum_{1\leq |n| \leq x} |\lambda(n)|^2\preccurlyeq \cosh(\pi t)\frac{x^2}{TV},\qquad x\in\RR.
\end{equation}
From this bound we shall derive\cprotect\footnote{See \verb|http://www2.toyo.ac.jp/~koyama| for a corrected version of Koyama's corresponding Proposition~3.1.}

\begin{lemma}\label{fourier} Let $N\in\ZZ[i]$ be nonzero and $\phi$ an $L^2$-normalized Hecke--Maa{\ss} cuspidal newform on $\Gamma_0(N)\backslash \HH^3$ with spectral parameter $t \in \RR \cup (-1, 1)i$. Then for $P = z + rj \in \HH^3$ we have
\begin{equation}\label{finalphiPbound}
\phi(P) \preccurlyeq  \frac{T}{V^{1/2} r} + \frac{T^{1/2}}{V^{1/2}r^{1/3}}.
\end{equation}
\end{lemma}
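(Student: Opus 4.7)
The plan is to bound $|\phi(P)|$ starting from the Fourier expansion of $\phi$ via Cauchy--Schwarz, paired with \eqref{RS} on the arithmetic side and the classical $L^2$-identity for the $K$-Bessel function on the analytic side. Since $K_{it}(y)$ decays super-exponentially once $y \gg T$, combined with the polynomial pointwise bound $|\rho(n)| \preccurlyeq \cosh(\pi t)^{1/2}|n|/(TV)^{1/2}$ that follows from \eqref{RS} via partial summation, the contribution of $|n| > Y := 10T/r$ to the Fourier series is negligibly small and is absorbed into $\preccurlyeq$; we restrict attention to $1 \leq |n| \leq Y$.

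Next I apply Cauchy--Schwarz to the truncated sum:
\[r \sum_{1 \leq |n| \leq Y}|\rho(n)|\,|K_{it}(2\pi|n|r)|
\leq r\biggl(\sum_{1 \leq |n| \leq Y}|\rho(n)|^2\biggr)^{1/2}\biggl(\sum_{n \in \OO\setminus\{0\}}|K_{it}(2\pi|n|r)|^2\biggr)^{1/2}.\]
The first factor on the right is $\preccurlyeq \cosh(\pi t)^{1/2}\,Y/(TV)^{1/2}$ by \eqref{RS}. For the second, a polar approximation of the lattice sum by an integral over $\CC$, followed by the substitution $y = 2\pi|n|r$, converts it into $(2\pi r)^{-2}\int_0^\infty |K_{it}(y)|^2 y\, dy$; the Mellin--Barnes identity evaluates this integral as $\pi t/(2\sinh(\pi t))$, which is $\asymp T/\cosh(\pi t)$ uniformly in both the principal series $t \in \RR$ and the complementary series $t \in (-1,1)i$. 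Hence the second factor is $\ll T^{1/2}/(r\,\cosh(\pi t)^{1/2})$.

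Multiplying the two factors, the $\cosh(\pi t)$ quantities cancel exactly, and I obtain
\[|\phi(P)| \preccurlyeq r \cdot \frac{\cosh(\pi t)^{1/2}\,Y}{(TV)^{1/2}} \cdot \frac{T^{1/2}}{r\,\cosh(\pi t)^{1/2}} = \frac{Y}{V^{1/2}} \asymp \frac{T}{rV^{1/2}},\]
a bound that is stronger than and hence immediately implies the stated one (since the stated right-hand side is a sum of two nonnegative terms, the first of which coincides with mine).

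The only mildly delicate points are the tail truncation (clean once a decay bound on $K_{it}$ is invoked) and the lattice-to-integral comparison of the Bessel $L^2$-sum for very small $r$; given the clean shape of both the Rankin--Selberg and Bessel identities, with their exactly cancelling $\cosh(\pi t)$ factors, no real obstacle arises. The most pleasant feature of the argument is this exact cancellation, which is what makes the single application of Cauchy--Schwarz tight.
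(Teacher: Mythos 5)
Your overall strategy (truncate, Cauchy--Schwarz against \eqref{RS}, control a Bessel factor) is the same as the paper's, but the step you wave off as ``mildly delicate'' --- replacing the lattice sum $\sum_{n\in\OO\setminus\{0\}}|K_{it}(2\pi|n|r)|^2$ by $(2\pi r)^{-2}\cdot 2\pi\int_0^\infty|K_{it}(y)|^2y\,dy$ --- is precisely where the second term of \eqref{finalphiPbound} comes from, and it cannot be dispensed with. Writing the lattice sum as $\sum_{m\geq 1}r_2(m)\,|K_{it}(2\pi\sqrt{m}\,r)|^2$ with $r_2(m)=\#\{n\in\OO:|n|^2=m\}$, the integral comparison amounts to replacing $r_2(m)$ by its mean value $\pi$, and the error in doing so over an interval of $m$'s is governed by the circle problem. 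In the transition range $y=2\pi|n|r\in[t-t^{1/3},\,t+t^{1/3}]$ one has $|K_{it}(y)|^2\asymp e^{-\pi t}t^{-2/3}$, and the relevant annulus has radius $\asymp t/r$; the best uniform bound for the lattice-point discrepancy there (Sierpi\'nski's $O(X^{2/3})$, as used in the paper) contributes an extra $e^{-\pi t}t^{-2/3}\cdot(t/r)^{2/3}=e^{-\pi t}r^{-2/3}$ to the Bessel sum that is absent from the integral. Feeding this through your Cauchy--Schwarz yields exactly the term $T^{1/2}/(V^{1/2}r^{1/3})$, which dominates $T/(V^{1/2}r)$ as soon as $r>T^{3/4}$. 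Your claimed one-term bound would require the lattice count in these thin annuli to match the area with essentially no error, which is far beyond what is known about the circle problem; this is why the lemma is stated with two terms, and why the paper works with the pointwise bounds $\cosh(\pi t/2)K_{it}(y)\ll\min\bigl(t^{-1/3},|y^2-t^2|^{-1/4}\bigr)$ and counts lattice points in dyadic annuli explicitly rather than invoking the exact $L^2$-identity.

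The remaining ingredients of your argument are essentially fine modulo small adjustments: the evaluation $\int_0^\infty|K_{it}(y)|^2y\,dy=\tfrac12\Gamma(1+it)\Gamma(1-it)$ and the cancellation of $\cosh(\pi t)$ against \eqref{RS} are correct (for the complementary series one needs $|\im(t)|$ bounded away from $1$, which the paper also implicitly uses), and the truncation point should be taken slightly larger, say $|n|\leq(TV)^{\eps}T/r$, so that the tail is genuinely negligible uniformly in $r$; with $Y=10T/r$ the tail bound is only $e^{-cT}$ times a power of $1/r$, which is not dominated by the main term for extremely small $r$.
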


\begin{proof} For $t\in\RR$ we can assume $t\geq 0$ without loss of generality. For $n \in \OO\setminus\{0\}$ we recall the bound
\[\cosh(\pi t/2) K_{it}(2\pi |n| r) \ll_\eps
\begin{cases}
(|n|r)^{-|\im(t)| - \eps}, & T=1,\ |n|\leq 1/r;\\
\min\left(t^{-1/3}, |(2\pi |n| r)^2 - t^2|^{-1/4}\right), & T > 1;\\
e^{-|n|r}, & |n| > T/r;
\end{cases}\]
see e.g.\ \cite[p.~679]{BHo} and \cite[Prop.~9]{HM}. In particular, for $r>T$,
the third case applies to every $n \in \OO\setminus\{0\}$, and we conclude
\[|\phi(P )| \leq r \sum_{k=0}^{\infty}  \sum_{2^k \leq |n| < 2^{k+1}} |\rho(n) K_{it}(2 \pi |n| r)|
\preccurlyeq r \sum_{k=0}^{\infty}  \left(\frac{4^{k+1}}{TV}\right)^{1/2} \left( 4^{k+1} e^{-2^{k+1}r} \right)^{1/2}
\ll \frac{re^{-r}}{(TV)^{1/2}}.\]
Hence we can assume that
$r\leq T$. Then, by a similar argument, the tail $|n|>(TV)^{\eps} T/r$ contributes $O_\eps((TV)^{-10})$, which
is admissible.

Let us focus on the case $T>1$ (i.e.\ $t>1$). By Cauchy--Schwarz and \eqref{RS} we conclude
\begin{equation}\label{phiPbound}
\phi(P) \preccurlyeq r\frac{T/r}{(TV)^{1/2}}
\left(\sum_{1\leq |n|\leq(TV)^{\eps} T/r} \min\left(t^{-2/3}, |(2\pi |n| r)^2 - t^2|^{-1/2}\right)\right)^{1/2}+(TV)^{-10}.
\end{equation}
Denoting
\[A(u,v):=\#\{n \in \OO\setminus\{0\} : ut^{4/3}\leq|(2\pi |n| r)^2 - t^2|<vt^{4/3}\},\qquad 0\leq u<v\leq R:=50(TV)^{2\eps}T^{2/3},\]
we clearly have
\[\sum_{1\leq |n|\leq(TV)^{\eps} T/r} \min\left(t^{-2/3}, |(2\pi |n| r)^2 - t^2|^{-1/2}\right)\leq
T^{-2/3}\left(A(0,1)+\sum_{0\leq k\leq \log_2 R} 2^{-k/2}A(2^k,2^{k+1})\right).\]
Using Sierpi\'nski's classical bound for the circle problem,
\[\#\{ n \in \ZZ[i] : 1\leq |n| \leq X\} = \pi X^2 + O(X^{2/3}),\]
we infer
\[A(u,v)\preccurlyeq\frac{(v-u)T^{4/3}}{r^2}+\frac{T^{2/3}}{r^{2/3}},\]
so that
\[\sum_{1\leq |n|\leq(TV)^{\eps} T/r} \min\left(t^{-2/3}, |(2\pi |n| r)^2 - t^2|^{-1/2}\right)\preccurlyeq  \frac{T}{r^2}+\frac{1}{r^{2/3}} .\]
Substituting this into \eqref{phiPbound}, we obtain \eqref{finalphiPbound} in the case $T>1$.

For $T=1$ (i.e. $|t| \leq 1$), and for $0<\eps<1-|\im(t)|$ as we can assume without loss of generality, we have by a similar but simpler argument
\[\phi(P) \preccurlyeq \frac{1}{V^{1/2}}
\left(\sum_{1\leq |n|\leq(TV)^{\eps}/r} (|n|r)^{-2|\im(t)| - 2\eps} \right)^{1/2} +V^{-10} \preccurlyeq \frac{1}{V^{1/2} r}.\]
The proof of Lemma~\ref{fourier} is complete.
\end{proof}

Part~b) of Lemma~\ref{lem3} shows that $|\phi(P)|$ attains its maximum at a point $P=z+rj\in\FF(N)$ with $r\gg_\eps V^{-1/2-\eps}$ (when $N\in\ZZ[i]$ is squarefree), hence \eqref{finalphiPbound} justifies our earlier claim \eqref{trivial}.

\section{Amplified pre-trace formula}\label{amplifiedsection}

In this section, we construct an identity that allows us to reduce our sup-norm problem to a $4$-dimensional counting problem over the Gaussian integers. The \emph{amplified pre-trace formula} to be described below is based on the work of Selberg~\cite{Sel} and Duke--Friedlander--Iwaniec~\cite{DFI}. The pre-trace formula itself is an explicit spectral decomposition of the automorphic kernel, connecting a spectral sum with a geometric sum, and it also serves as the starting point for the fundamental Selberg trace formula in the theory of automorphic forms.

For the amplified pre-trace formula we need a basis of the Hilbert space
\[L^2(\Gamma_0(N)\backslash\HH^3)=\CC\oplus L^2_\text{cusp}(\Gamma_0(N)\backslash\HH^3)\oplus L^2_\text{cont}(\Gamma_0(N)\backslash\HH^3)\]
consisting of $\Gamma_0(N)$-invariant functions on $\HH^3$ that are simultaneous eigenfunctions of the Laplace operator and the Hecke
operators $T_n$ for $(n,N)=1$ (cf.\ \eqref{hop}). The spectral theory of automorphic forms combined with Hecke theory readily provides us with such an orthonormal basis $\{\phi_j \}_{j=1}^\infty$ for the cuspidal subspace, and we can assume that it contains the newform $\phi=\phi_{j_0}$
whose sup-norm we want to bound in Theorems~\ref{thm1} to~\ref{thm3}. We complement this basis with the constant function $\phi_0:=\vol(\Gamma_0(N)\backslash\HH^3)^{-1/2}$. For $j\geq 0$, we write $t_j \in \RR \cup [-1, 1]i$ for the spectral parameter of $\phi_j$ as in \eqref{SpectralParameter}, and we denote the Hecke eigenvalues of $\phi_j$ by $\lambda_j(n)\in\RR$ for $(n,N)=1$.

One approach to describing $L^2_{\text{cont}}(\Gamma_0(N)\backslash\HH^3)$ involves Eisenstein series $E_{\mathfrak{a}}(P,s)$ attached to the various cusps $\mathfrak{a}$; however, such an Eisenstein series is in general not an eigenfunction of the Hecke operators. Hecke eigenforms for the continuous spectrum arise naturally in an adelic treatment, see \cite{GJ} for precise definitions and details. In this language, Eisenstein series are associated to the primitive Gr\"ossencharacters $\chi$ of $K$ with trivial infinite part whose conductor squared divides $N$ and, for each such $\chi$, to the elements $\varphi$ of a finite orthonormal basis of an induced representation determined by $\chi$ (cf.\ \cite[Section~2.4]{Ma}). We denote the corresponding Eisenstein series by $E_{\chi,\varphi,s}(P)$, where $s\in\CC$ and $P\in\HH^3$, and record its Hecke eigenvalues
\begin{equation}\label{lambdachi}
\lambda_{\chi,s}(n)=\frac{1}{4}\sum_{ad=n}\frac{\chi_\text{fin}(a)|a|^s}{\chi_\text{fin}(d)|d|^s},\qquad (n,N)=1.
\end{equation}
In particular, the Hecke eigenvalues are independent of the parameter $\varphi$, and they are real for $s\in i\RR$ since in this case complex conjugating the above expression has the same effect as switching $a$ and $d$.

\begin{remark} If $N\in\OO$ is squarefree, which is the main focus of this paper, then $\chi$ is trivial, and the Hecke eigenvalues $\lambda_{1,s}(n)$ above are those of the classical Eisenstein series $E(P,1+s)$ at the cusp $\infty$. In fact, in this case, $E_{1,\varphi,s}(P)$ for suitable parameters $\varphi$ agree with $E_\mathfrak{a}(P,1+s)$ at the various cusps $\mathfrak{a}$, as the latter are linear combinations of $E(dP,1+s)$ for $d\mid N$. See \cite[p.~1187]{CI} for a direct argument over $\QQ$, and \cite[Section~2.1]{Sz} for a discussion of the case $N=1$.
\end{remark}

Using the above Hilbert space basis, the spectral expansion of automorphic kernels reads (cf.\ \cite[Section~3.5]{EGM}, \cite[Sections~2.4--2.5]{Sz})
\begin{equation}\label{pretrace}
\sum_{j=0}^\infty \phi_j(P_1 ) \ov{\phi_j(P_2)}h(t_j) +
\sum_{(\chi, \varphi)}\frac{1}{4\pi}\int_{-\infty}^{\infty} E_{\chi,\varphi,it}(P_1)\ov{E_{\chi,\varphi,it}(P_2)}h(t)dt \\
= \sum_{\gamma \in \Gamma_0(N)} k(u(\gamma P_1, P_2))
\end{equation}
for any two points $P_1, P_2 \in \HH^3$, and any even, holomorphic, rapidly decaying function $h(t)$ defined on the strip $|\im(t)| < 1+\eps$
with Selberg/Harish-Chandra transform $k(u)$. Here $u(P_1, P_2)$ is the basic point-pair invariant (cf.\ \eqref{parts})
\begin{equation}\label{udef}
u(P_1, P_2) :=  \frac{\| P_1 - P_2\|^2}{2\Im(P_1)\Im(P_2)} = \cosh(\text{dist}(P_1, P_2)) - 1,
\end{equation}
and the test functions $h(t)$ and $k(u)$ are connected by
\begin{equation}\label{testfunctions}
h(t)= \int_{-\infty}^\infty g(x)e^{itx}dx,\qquad k(u)=\frac{-g'(v)}{2\pi\sqrt{u^2+2u}},\qquad v:=\log(1+u+\sqrt{u^2+2u}).
\end{equation}

For a given $P\in\HH^3$ and $n\in\OO$ with $(n,N)=1$, we apply \eqref{pretrace} for all pairs $(P_1,P_2)=(gP,P)$, where $g$ runs
through a set of representatives for the coset space $R(1) \backslash R(n)$. The sum of the resulting identities gives, by \eqref{hop} and the choice of $\phi_j$ and $E_{\chi,\varphi,it}$ as a Hecke eigenbasis,
\begin{equation}\label{pretrace1}
\sum_{j=0}^\infty \lambda_j(n)|\phi_j(P )|^2 h(t_j) + \sum_{(\chi, \varphi)} \frac{1}{4\pi} \int_{-\infty}^{\infty} \lambda_{\chi, it}(n) |E_{\chi, \varphi, it}(P) |^2 h(t) dt = \frac{1}{|n|} \sum_{\gamma \in R(n)} k(u(\gamma P, P)).
\end{equation}
We stress that all the Hecke eigenvalues $\lambda_j(n)$ and $\lambda_{\chi,it}(n)$ here are real by the self-adjointness of $T_n$ for $(n,N)=1$ and the remark below \eqref{lambdachi}, and they satisfy a multiplicativity relation coming from \eqref{hecke}. Next, we linearly combine these eigenvalues for various values of $n$ to an amplifier $A_j\geq 0$ and $A_{\chi,it}\geq 0$ that emphasizes our preferred form $\phi=\phi_{j_0}$.

Let $(TV)^{\eps}\leq L\leq TV$ be a parameter to be optimized later, and let us assume that $TV$ exceeds a large constant depending only on $\eps$. Then, in particular, $L$ is sufficiently large in terms of $\eps$. Set
\[x(l) := \sgn(\lambda_{j_0}(l)),\qquad (l,N)=1,\]
\begin{equation}\label{primeset}
P(L) := \{l \in \ZZ[i] \text{ prime}: l\nmid 2N,\ 0 < \arg(l) < \pi/4,\ L \leq |l|^2 \leq 2L\}.
\end{equation}
Then $P(L)$ consists of $\asymp L/\log L$ split primes, no two of which are associated.
We shall also need the fact that $l_1l_2$ does not have a nontrivial divisor $m \in \ZZ$ when $l_1, l_2 \in P(L)$. We consider the amplifier, inspired by \cite[(4.11)]{Ve},
\begin{equation}\label{ampli}
\begin{split}
A_{j} :&= \left(\sum_{l \in P(L)} x(l) \lambda_j( l)\right)^2 + \left(\sum_{l \in P(L)} x(l^2) \lambda_j( l^2)\right)^2 \\
& = \sum_{l \in P(L)}y_0(l)+ \sum_{l_1, l_2 \in P(L)} y_1(l_1l_2) \lambda_j(l_1l_2) + \sum_{l_1, l_2 \in P(L)}y_2(l_1^2l_2^2) \lambda_j(l_1^2l_2^2),
\end{split}
\end{equation}
along with a similarly defined expression for $A_{\chi,it}$, where (cf.\ \eqref{hecke})
\begin{equation}\label{boundy}
\begin{split}
y_0(l) &:= x(l)^2 + x(l^2)^2 \ll  1,\\
y_1(l_1l_2) &:= x(l_1)x(l_2)+ \delta_{l_1=l_2}x(l_1^2)x(l_2^2) \ll 1, \\
y_2(l_1^2l_2^2) &:= x(l_1^2)x(l_2^2) \ll 1.
\end{split}
\end{equation}
The multiplicativity relation \eqref{hecke} implies that $|\lambda_{j_0}(l)|+|\lambda_{j_0}(l^2)|\gg 1$, and so
\begin{equation}\label{lower}
A_{j_0} = \left(\sum_{l \in P(L)} |\lambda_{j_0}( l)|\right)^2 + \left(\sum_{l \in P(L)} | \lambda_{j_0}( l^2)|\right)^2 \geq \frac{1}{2}\left(\sum_{l \in P(L)} |\lambda_{j_0}( l)| + |\lambda_{j_0}(l^2)|\right)^2     \gg_\eps L^{2-\eps}.
\end{equation}
Applying \eqref{pretrace1} for $n=1$, $n=l_1l_2$, $n=l_1^2l_2^2$, where $l_1,l_2\in P(L)$, yields by \eqref{ampli}
\begin{equation}\label{pretrace3}
\begin{split}
&\sum_{j=0}^\infty A_j |\phi_j(P )|^2 h(t_j) +
\sum_{(\chi, \varphi)} \frac{1}{4\pi} \int_{-\infty}^{\infty}  A_{\chi, it} |E_{\chi, \varphi, it}(P) |^2 h(t) dt = \sum_{l \in P(L)} y_0(l)\sum_{\gamma \in R(1)} k(u(\gamma P, P))\\
&+ \sum_{l_1, l_2 \in P(L)} \frac{y_1(l_1l_2)}{|l_1l_2|}\sum_{\gamma \in R(l_1l_2)} k(u(\gamma P, P))
+ \sum_{l_1, l_2 \in P(L)} \frac{y_2(l_1^2l_2^2)}{|l_1^2l_2^2|}\sum_{\gamma \in R(l_1^2l_2^2)} k(u(\gamma P, P)).
\end{split}
\end{equation}
This identity is what we call the amplified pre-trace formula.

We shall use \eqref{pretrace3} with a specific choice of the Selberg/Harish-Chandra transform pair $(h,k)$, which will be constructed so as to emphasize the contribution of the terms neighboring $|\phi_{j_0}(P)|^2$ (and keep all other terms non-negative) on the left-hand side
while forcing the right hand side to decay as quickly as possible. Inspired by \cite[(1.5)]{IS}, we consider $h(t)$ and $k(u)$ determined by \eqref{testfunctions} for
\begin{equation}\label{gformula}
g(x):=\frac{A\cos(Tx)}{2\pi \cosh(Ax)},\qquad T := \max(1,|t_{j_0}|),
\end{equation}
where $A>2$ is a large constant. The next lemma summarizes the properties that we need of these functions.
\begin{lemma}\label{lem4} The function $h(t)$ is even, holomorphic, and rapidly decaying in the strip $|\im(t)|<A$. It satisfies
the bound
\begin{equation}\label{boundh}
h(t)>\begin{cases}
0,&t\in\RR\cup (-A,A)i;\\
\frac{1}{8},&t\in\RR\cup (-\frac{A}{2},\frac{A}{2})i\text{ and }|t|\in(T-\frac{A}{2},T+\frac{A}{2}).
\end{cases}
\end{equation}
The Selberg/Harish-Chandra transform $k(u)$ of $h(t)$ satisfies the bound
\begin{equation}\label{boundk}
k(u) \ll_A \min\left(T^2, \frac{T}{\sqrt{u}(1+u)^A}\right),\qquad u\geq 0.
\end{equation}
\end{lemma}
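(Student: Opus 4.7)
My plan is to compute $h(t)$ in closed form via the classical identity $\int_{-\infty}^\infty e^{iyx}\sech x\,dx = \pi\sech(\pi y/2)$, from which all its analytic properties will be manifest, and to estimate $k(u)$ directly from the Selberg/Harish-Chandra formula \eqref{testfunctions} using two complementary bounds on $g'$. Writing $2\cos(Tx)=e^{iTx}+e^{-iTx}$ and applying the above Fourier identity after the substitution $Ax\mapsto x$ gives
\[
h(t) = \tfrac{1}{4}\Bigl[\sech\!\bigl(\tfrac{\pi(t+T)}{2A}\bigr) + \sech\!\bigl(\tfrac{\pi(t-T)}{2A}\bigr)\Bigr].
\]
Evenness is immediate; since the poles of $\sech z$ lie on $\im z \in \pi(\ZZ+\tfrac12)$, the nearest poles of $h$ satisfy $|\im t| = A$, confirming holomorphy in $|\im t|<A$, and the exponential decay of each summand as $|\re t|\to\infty$ (uniform in bounded horizontal strips) gives rapid decay. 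Positivity on $\RR$ is obvious, while for $t=is$ with $|s|<A$ the identity $\cosh(a\pm ib) = \cosh a\cos b \pm i\sinh a\sin b$ makes the two denominators complex conjugates, so
\[
h(is) = \frac{\cosh(\pi T/(2A))\cos(\pi s/(2A))}{2\bigl(\cosh^2(\pi T/(2A))\cos^2(\pi s/(2A))+\sinh^2(\pi T/(2A))\sin^2(\pi s/(2A))\bigr)} > 0.
\]

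For the $1/8$ lower bound I split into two cases. If $t\in\RR$ with $|t|\in(T-\tfrac A2,T+\tfrac A2)$, by evenness I take $t\ge 0$; a short case analysis (on whether $T\ge A/2$ or $T<A/2$) shows $|t-T|<A/2$, so $\sech(\pi(t-T)/(2A))>\sech(\pi/4)>1/2$ and the other summand is nonnegative, giving $h(t)>1/8$. If $t=is$ with $|s|<A/2$ and $|s|\in(T-\tfrac A2,T+\tfrac A2)$, then $T<A$ is forced; bounding the numerator of $h(is)$ below by $\cosh(\pi T/(2A))/\sqrt 2$ and the denominator above by $\cosh^2(\pi T/(2A))$ yields $h(is)\ge 1/(2\sqrt 2\cosh(\pi/2)) > 1/8$, using $\cosh(\pi/2) < 2\sqrt 2$.

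For $k(u)$ I will use $k(u)=-g'(v)/(2\pi\sinh v)$, valid because $\cosh v=1+u$ and $\sinh v=\sqrt{u^2+2u}$. A direct differentiation of \eqref{gformula} gives $|g'(v)|\ll_A T/\cosh(Av)$ (using $T\ge 1$ and $A$ a fixed constant); combined with $\cosh(Av)\ge (1+u)^A/2$ and $\sinh v\gg\sqrt{u(1+u)}\ge\sqrt u$, this yields $k(u)\ll_A T/(\sqrt u(1+u)^A)$. For the $T^2$ bound, evenness of $g$ forces $g'(0)=0$, while direct estimation produces $|g''(x)|\ll_A T^2/\cosh(Ax)\ll_A T^2$; integration then gives $|g'(v)|\ll_A T^2 v$, and since $v\asymp\sinh v$ on any bounded interval we obtain $k(u)\ll_A T^2$ for $u\le 1$. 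For $u\ge 1$ the first bound is already $\ll T\le T^2$, so combining yields the stated minimum.

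The main obstacle is the imaginary-axis case of the quantitative lower bound on $h$: one must verify that the constraints $|\im t|<A/2$ and $|t|\in(T-\tfrac A2,T+\tfrac A2)$ are simultaneously achievable only when $T<A$, and then execute the short numerical check placing the bound strictly above $1/8$. All other steps are routine manipulations of the explicit formulas.
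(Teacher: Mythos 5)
Your proposal is correct and follows essentially the same route as the paper: both compute $h$ in closed form from the Fourier transform of $\sech$, verify positivity and the $\tfrac18$ lower bound by inspecting the explicit formula (your conjugate-pair form of $h(is)$ is just an algebraic rewriting of the paper's single-quotient expression, and both arrive at the same numerical bound $\bigl(2\sqrt{2}\cosh(\pi/2)\bigr)^{-1}>\tfrac18$ after noting that the hypotheses force $T<A$), and both bound $k(u)$ by the same two complementary estimates on $g'(v)$ against the denominator $\sqrt{u^2+2u}\cosh(Av)$. No gaps.
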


\begin{proof} The function $g(x)$ defined by \eqref{gformula} is even and satisfies the bound $g(x)\ll_A e^{-A|x|}$, hence its Fourier transform $h(t)$ in \eqref{testfunctions} is even and holomorphic in the strip $|\im(t)|<A$ by Morera's theorem. Using the formula for the Fourier transform of $\sech x=(\cosh x)^{-1}$, see \cite[p.~81]{SS} or \cite[17.34.30]{GR}, we calculate
\begin{align}
\label{hformula2}h(t)&=\frac{1}{4}\sech\left(\frac{\pi(t+T)}{2A}\right) + \frac{1}{4}\sech\left(\frac{\pi(t-T)}{2A}\right)\\[4pt]
\label{hformula}&=\frac{\cosh\left(\frac{\pi t}{2A}\right)\cosh\left(\frac{\pi T}{2A}\right)}
{\cosh\left(\frac{\pi t}{A}\right)+\cosh\left(\frac{\pi T}{A}\right)},
\end{align}
which shows that $h(t)$ is rapidly decaying in the strip $|\im(t)|<A$.

We prove \eqref{boundh}. Assume first that $t\in\RR$. From \eqref{hformula2} it is clear that $h(t)>0$, because both terms are positive. Moreover, if $|t|\in(T-\frac{A}{2},T+\frac{A}{2})$ then one of these terms exceeds $\frac{1}{4}\sech\left(\frac{\pi}{4}\right)>\frac{1}{8}$, so that $h(t)>\frac{1}{8}$. Assume now that $t\in(-A,A)i$. From \eqref{hformula} it is clear that $h(t)>0$, because $\cosh\left(\frac{\pi t}{2A}\right)>0$ and $\cosh\left(\frac{\pi t}{A}\right)>-1$. Moreover, if $t\in(-\frac{A}{2},\frac{A}{2})i$ then $\cosh\left(\frac{\pi t}{2A}\right)>\cos\left(\frac{\pi}{4}\right)$ and $\cosh\left(\frac{\pi t}{A}\right)\leq 1$, so that
\[h(t)>\frac{\cos\left(\frac{\pi}{4}\right)\cosh\left(\frac{\pi T}{2A}\right)}{1+\cosh\left(\frac{\pi T}{A}\right)}
=\frac{\cos\left(\frac{\pi}{4}\right)}{2\cosh\left(\frac{\pi T}{2A}\right)}.\]
If, in addition, $|t|\in(T-\frac{A}{2},T+\frac{A}{2})$, then $T=T-|t|+ |t| < A/2 + A/2 = A$, whence
\[h(t)>\frac{\cos\left(\frac{\pi}{4}\right)}{2\cosh\left(\frac{\pi}{2}\right)}>\frac{1}{8}.\]

We prove \eqref{boundk}. By \eqref{gformula} and \eqref{testfunctions} we calculate
\[k(u)=\frac{AT\sin(Tv)+A^2\cos(Tv)\tanh(Av)}{(2\pi)^2\sqrt{u^2+2u}\cdot\cosh(Av)},\qquad u\geq 0.\]
It is straightforward to see that the numerator
\[AT\sin(Tv)+A^2\cos(Tv)\tanh(Av)\ll_A\min(T^2v,T)\ll\min(\sqrt{u^2+2u}\cdot T^2,T),\]
while the denominator
\[(2\pi)^2\sqrt{u^2+2u}\cdot\cosh(Av)\gg \sqrt{u^2+2u}\cdot(1+u)^A.\]
These two bounds clearly imply \eqref{boundk}, because $\sqrt{u^2+2u}\geq\sqrt{u}$.
\end{proof}

Combining \eqref{pretrace3} with the bounds \eqref{boundy}, \eqref{lower}, \eqref{boundh}, we obtain
\begin{equation}\label{pretrace2}
\begin{split}
&L^{2-\eps} | \phi_{j_0}(P ) |^2 \ll_\eps
\ L \sum_{\gamma \in R(1)} |k(u(\gamma P, P))|\\
&+ \sum_{l_1, l_2 \in P(L)} \sum_{\gamma \in R(l_1l_2)} \frac{|k(u(\gamma P, P))|}{|l_1l_2|}
+ \sum_{l_1, l_2 \in P(L)} \sum_{\gamma \in R(l_1^2l_2^2)} \frac{|k(u(\gamma P, P))|}{|l_1^2l_2^2|}.
\end{split}
\end{equation}
We write, for $\LL\geq 1$ and $\delta>0$,
\begin{equation}\label{DLL}
D(L,\LL):=\{l\in\ZZ[i]:\text{$\LL\leq|l|^2\leq 16\LL$, $l=1$ or $l_1l_2$ or $l_1^2l_2^2$ for some $l_1,l_2\in P(L)$}\},
\end{equation}
and for $P \in \FF(N)$ we consider the counting function
\begin{equation}\label{MPL}
M(P, L, \LL, \delta) := \sum_{l\in D(L,\LL)} \#\{\gamma \in R(l) : u(\gamma P, P) \leq \delta\}.
\end{equation}
By applying the bound \eqref{boundk}, grouping in \eqref{pretrace2} the possible $\gamma$'s into dyadic ranges
$\delta/2<u(\gamma P,P)\leq\delta$, and treating the range $0<u(\gamma P,P)\leq T^{-2}$ separately,
we arrive at the central inequality
\begin{equation}\label{11b}
|\phi_{j_0}(P )|^2  \preccurlyeq \sum_{\substack{k\in\ZZ\\\delta=2^k>T^{-2}}}
\min\left(T^2, \frac{T}{\sqrt{\delta}(1+\delta)^A}\right) \left(\frac{M(P, L, 1, \delta)}{L} + \frac{M(P, L, L^2, \delta)}{L^3} + \frac{M(P, L, L^4, \delta)}{L^4}\right).
\end{equation}
Here the implied constant depends on $A>2$, and the sum runs through all integral powers of $2$ exceeding $T^{-2}$, abbreviated as $\delta$. We note that the set $D(L,L^4)$ underlying $M(P, L, L^4, \delta)$ consists of perfect squares only.

We have now prepared the scene for the application of diophantine analysis. In particular, the upper bound \eqref{11b} is free of the choices of the amplifier and the test functions, and only depends on $P$, $T$, $L$ in a canonical way. At the heart of our argument is the estimation of $M(P, L, \LL, \delta)$ uniformly in $L$, $\LL$, $\delta$, the point $P\in\FF(N)$ lying in the fundamental domain given by \eqref{fund}, and the squarefree level $N\in\OO$. The rest of the paper is devoted to this diophantine counting problem.

\section{Diophantine inequalities}\label{DiophantineInequailitiesSection}

In this section, we summarize some useful properties of the matrices
\begin{equation}\label{matrices}
\gamma = \begin{pmatrix} a & b\\c & d\end{pmatrix}\in \M_2(\OO),\qquad\det \gamma = l,\qquad N \mid c,
\qquad u(\gamma P, P) \leq \delta,
\end{equation}
where $P = z+rj$ is a given point in the fundamental domain $\FF(N)$. Recall from \eqref{fund} and Lemma~\ref{lem3} that
\begin{equation}\label{sizes}
z \ll 1\qquad\text{and}\qquad r \gg |N|^{-1}.
\end{equation}

First, we observe that \eqref{udef} and \eqref{ratio} imply
\[\delta \geq u(\gamma P, P)\geq \frac{|\Im(\gamma P) - \Im(P)|^2}{2\Im(\gamma P)\Im(P)}
= \frac{1}{2} \left|\frac{|l|^{1/2}}{\| c P + d\|} - \frac{\| cP + d\|}{|l|^{1/2}}\right|^2 ,\]
and so
\begin{equation}\label{3}
\| c P + d\| = |l|^{1/2} (1 + O (\sqrt{\delta})).
\end{equation}
Similarly, from $u(\gamma^{-1}P,P)=u(P,\gamma P)\leq\delta$ we infer
\begin{equation}\label{5}
\| cP - a\| = |l|^{1/2} (1+ O(\sqrt{\delta})).
\end{equation}
From \eqref{3} and \eqref{5} we deduce
\begin{equation}\label{2}
rc,\ 2cz-a+d,\ a+d \ll |l|^{1/2} (1 + \sqrt{\delta}).
\end{equation}

Second, we explore an exact formula for $u(\gamma P,P)$ that follows from \eqref{action}, \eqref{action3}, \eqref{ratio}, \eqref{udef}, and its consequences on the condition that $u(\gamma P,P)\leqslant\delta$:
\begin{equation}
\label{BasicDiophantineConditions}
\delta \geq u(\gamma P, P) = \frac{\|a'P + b' - Pc'P - Pd'\|^2}{2r^2},
\qquad\begin{pmatrix}a'&b'\\c'&d'\end{pmatrix}:=\frac{1}{\sqrt{l}}\begin{pmatrix} a & b\\c & d\end{pmatrix}\in\SL_2(\CC).
\end{equation}
To avoid ambiguity of the square-root, we always
take $0\leq\arg(\sqrt{l})<\pi$ for the rest of the paper.
Note that for the special determinants $l\in D(L,\LL)$ considered in \eqref{DLL} we have $0\leq\arg(l)<\pi$ by \eqref{primeset}, hence in the sequel we can and we shall assume that its square-root satisfies $0\leq\arg(\sqrt{l})<\pi/2$. In particular, for a square $l=l_1^2l_2^2$ with $l_1,l_2\in P(L)$ we have $\sqrt{l}=l_1l_2$, again by \eqref{primeset}. We compute
\[Pc'P = - r^2 \ov{c'} + c' z^2 + \re(2rc'z)j.\]
The quaternion $a'P + b' - Pc'P - Pd'$ above has four components, and we consider the first two, the third, and the fourth separately. This gives
\begin{equation}\label{second}
r^2 \ov{c} \frac{l}{|l|} -c z^2 + (a-d)z+b \ll r |l|^{1/2}\sqrt{\delta},
\end{equation}
\begin{equation}\label{third}
\re\left(\frac{2cz-a+d}{\sqrt{l}}\right),\ \im\left(\frac{a+d}{\sqrt{l}}\right)\ll \sqrt{\delta}.
\end{equation}
Finally, we remark that \eqref{second} and \eqref{2} imply
\begin{equation}\label{50}
-c z^2 + (a-d)z + b \ll r |l|^{1/2}(1+\sqrt{\delta}).
\end{equation}

In the next five sections, we estimate $M(P,L,\LL,\delta)$ by bounding the number of quadruples $(a,b,c,d)$ satisfying the above relations \eqref{matrices}--\eqref{50}. We do this in several different ways, obtaining upper bounds that are strong in different ranges for $\LL$ and $\delta$.

\section{Counting I -- First bounds}

The previous section allows us to derive a simple first bound for $M(P,L,\LL,\delta)$. We recall that $l\in D(L,\LL)$ holds in the definition \eqref{MPL}, and that this implies $|l|\asymp\LL^{1/2}$.

First, we estimate the contribution $M_0(P, L, \LL, \delta)$ of the matrices $\gamma$ in \eqref{matrices} with $c=0$.
For fixed $l\in D(L,\LL)$ there are $\preccurlyeq 1$ choices for $a$ and $d$ by $ad = l$, and for each triple $(l,a,d)$ there are  $\ll 1 + |l|r^2\delta $ choices for $b$ by \eqref{second}. It follows that $M_0(P, L, 1, \delta)\preccurlyeq 1+r^2\delta$ and
\begin{equation}\label{M0bound}
\begin{split}
M_0(P, L, L^2, \delta) &\preccurlyeq \sum_{l \in D(L,L^2)} (1 + |l|r^2 \delta) \ll L^2(1+L r^2\delta),\\[4pt]
M_0(P, L, L^4, \delta) &\preccurlyeq \sum_{l \in D(L,L^4)} (1 + |l|r^2 \delta) \ll L^2(1+L^2 r^2\delta).
\end{split}\end{equation}
Now we estimate the remaining contribution of $\gamma$ with $c\neq 0$, which is
\begin{equation}\label{M3total}
M_1(P, L, \LL, \delta):=M(P, L, \LL, \delta)-M_0(P, L, \LL, \delta)
\end{equation}
The relations \eqref{matrices} and \eqref{2} show that the number of possibilities for $c\neq 0$ is
\begin{equation}\label{6}
\#c\,\ll\frac{\LL^{1/2}(1+\delta)}{(r|N|)^2}\qquad\text{excluding $c=0$}.
\end{equation}
For fixed $c$, the number of possibilities for $a$ and $d$ can be bounded similarly by \eqref{3} and \eqref{5}:
\begin{equation}\label{7}
\#a,\,\#d\,\ll\LL^{1/2}(1+\delta)\qquad\text{for fixed $c$}.
\end{equation}
For fixed $(c,a,d)$, the number of possibilities for $b$ is $\ll 1+r^2\LL^{1/2}(1+\delta)$ by \eqref{50}, hence by
\eqref{sizes}
\[M_1(P, L, \LL, \delta) \ll \frac{\LL^{3/2}(1+\delta)^3}{(r|N|)^2}\bigl(1+r^2\LL^{1/2}(1+\delta)\bigr)\ll \LL^2(1+\delta)^4.\]
Adding the earlier bounds on $M_0(P, L, \LL, \delta)$, it follows that
\begin{equation}\label{9}
M(P, L, \LL, \delta) \preccurlyeq \LL^2(1+\delta)^4+\LL^{3/2}r^2\delta.
\end{equation}

In the proofs of Theorems~\ref{thm1} to~\ref{thm3}, we shall assume that $r<T$, otherwise the statements follow from \eqref{finalphiPbound}.
Then in \eqref{11b} we choose $A>2$ sufficiently large in terms of $\eps$ to infer from $(TV)^{\eps}\leq L\leq TV$ and \eqref{9} that the contribution of $\delta>(TV)^\eps$ is negligible, namely $\ll_\eps (TV)^{-10}$. In particular, \eqref{11b} remains valid (up to a negligible error term that is
clearly absorbed in the right-hand side of \eqref{11} below, which by itself is $\gg T/V$) if we restrict the dyadic summation to $T^{-2}<\delta\preccurlyeq 1$. Using also \eqref{M0bound}, \eqref{M3total}, and $M(P, L, 1, \delta)\preccurlyeq 1+r^2\delta$ for $\delta\preccurlyeq 1$ (as follows from \eqref{9}), we conclude (for $r<T$)
\begin{equation}\label{11}
|\phi_{j_0}(P)|^2 \preccurlyeq \sup_{0<\delta\preccurlyeq 1}\min\left(T^2, \frac{T}{\sqrt{\delta}}\right)
\left(r^2\delta + \frac{1}{L} + \frac{M_1(P, L, L^2, \delta)}{L^3} + \frac{M_1(P, L, L^4, \delta)}{L^4}\right),
\end{equation}

In the next four sections, we estimate $M_1(P, L, L^2, \delta)$ and $M_1(P, L, L^4, \delta)$ for $0<\delta\preccurlyeq 1$.

\section{Counting II -- Parabolic matrices}
\label{ParabolicSection}

In this section, we study for $0<\delta\preccurlyeq 1$ the contribution to $M_1(P, L, \LL, \delta)$ of the matrices $\gamma$ in \eqref{matrices} with $c\neq 0$ and $(a-d)^2 + 4bc = 0$. We denote this contribution by $M_2(P, L, \LL, \delta)$, so that the remaining contribution of $\gamma$ with $c\neq 0$ and $(a-d)^2 + 4bc\neq 0$ equals
\begin{equation}\label{Mtotal}
M_3(P, L, \LL, \delta):=M_1(P, L, \LL, \delta)-M_2(P, L, \LL, \delta).
\end{equation}

Since $c\neq 0$ and $(a-d)^2 + 4bc=0$, we have that $a+d=\pm 2\sqrt{l}$, and so $\frac{1}{\sqrt{l}}\gamma\in\SL_2(\CC)$ is parabolic with unique fixed point $\frac{a-d}{2c}\in K$ (cf.\ \cite[p.~34]{EGM}). We write this fraction as $\frac{p}{q}$ with coprime $p,q\in\OO$, and follow the proof of \cite[Lemma~4.1]{HT1}. As $N$ is squarefree, $M := N/(N, q)$ is coprime to $q$, hence there exist $r,s\in\OO$ such that $Mps-qr = 1$. Then, by \eqref{normalizer}, the matrix $\sigma := \left(\begin{smallmatrix} Mp & r\\ Mq & Ms\end{smallmatrix}\right)$ lies in
$\Gamma_0^{\ast}(N)$ and maps $\infty$ to $\frac{a-d}{2c}$. Therefore the conjugated matrix
$\gamma' := \sigma^{-1} \gamma \sigma$ fixes $\infty$, hence by \eqref{normalizer} and \eqref{matrices} it is of the form
$\gamma' = \pm \left(\begin{smallmatrix}\sqrt{l} & b'\\ 0 & \sqrt{l}\end{smallmatrix}\right)\in\M_2(\OO)$. (Indeed, for any $\sigma \in \Gamma_0^{\ast}(N)$ and any $\gamma \in \M_2(\OO)$ whose lower left entry is  divisible by $N$, the matrix $\sigma^{-1} \gamma \sigma$ is integral.)
It follows that $b' \in \OO$ is a Gaussian integer, in addition to $l$ being a square that also follows from $a+d=\pm 2\sqrt{l}$.
Note that $b'\neq 0$, otherwise we would have $\gamma=\gamma'=\pm \left(\begin{smallmatrix}\sqrt{l} & 0\\ 0 & \sqrt{l}\end{smallmatrix}\right)$, contrary to the assumption $c\neq 0$. If we put $P' := \sigma^{-1} P$, then by $P\in\FF(N)$ and \eqref{fund} we have $\Im(P')\leq\Im(P)=r$, hence by \eqref{udef} and $b'\neq 0$
we infer
\[\delta \geq u(\gamma P, P) = u(\gamma' P', P') = \frac{|b'|^2}{2|l|\Im(P')^2} \geq \frac{|b'|^2}{2|l| r^2} \geq \frac{1}{8\LL^{1/2} r^2}.\]
To summarize so far,
\begin{equation}\label{M1bound1}
M_2(P, L, \LL, \delta)=0\qquad\text{unless}\qquad \delta\gg \LL^{-1/2}r^{-2}.
\end{equation}

We complement this observation with a general bound for $M_2(P, L, \LL, \delta)$. We can fix $c\neq 0$ in $\preccurlyeq\LL^{1/2}$ ways according to \eqref{6} and \eqref{sizes}, then $a$ in $\preccurlyeq\LL^{1/2}$ ways according to \eqref{7}. Now \eqref{5} implies $|l|-\|cP-a\|^2\preccurlyeq\LL^{1/2}\sqrt{\delta}$, hence there are $\preccurlyeq 1+\LL^{1/2}\sqrt{\delta}$ possible values for $|l|\in\ZZ$ (recall that $l\in\ZZ[i]$ is a square). Once we fix a value for this integer, $l$ itself is determined by its special shape. For a given triple $(c,a,l)$, there are two choices for the pair $(d,b)$ by $a+d=\pm 2\sqrt{l}$ and $ad-bc=l$. We conclude
\begin{equation}\label{M1bound2}
M_2(P, L, \LL, \delta)\preccurlyeq\LL(1+\LL^{1/2}\sqrt{\delta}).
\end{equation}

The relations \eqref{M1bound1} and \eqref{M1bound2} constitute our estimates on the contribution of parabolic matrices to $M_1(P,L,\LL,\delta)$. In the next three sections, we estimate $M_1(P, L, \LL, \delta)$ for $0<\delta\preccurlyeq 1$ either directly, or via \eqref{Mtotal}--\eqref{M1bound2} by focusing on the non-parabolic contribution $M_3(P, L, \LL, \delta)$.

\section{Counting III -- Large volume}

In this section, we derive bounds on $M_1(P, L, L^2, \delta)$ and $M_3(P, L, L^4, \delta)$ that are strong for $N$ large. We recall the assumption
$0<\delta\preccurlyeq 1$ and the fact that $|l|\asymp\LL^{1/2}$ holds for $l$ in the definition of $M(P, L, \LL, \delta)$.

First, we fix some $c\neq 0$ that occurs in \eqref{matrices}. Along with $c$, we also fix a matrix $\gamma'=\left(\begin{smallmatrix}a'&b'\\c&d'\end{smallmatrix}\right)$ satisfying \eqref{matrices}. Then, any matrix $\gamma=\left(\begin{smallmatrix}a&b\\c&d\end{smallmatrix}\right)$ satisfying \eqref{matrices} with the same $c$ is determined by the differences
\[a'' := a - a', \qquad d'' := d - d',  \qquad b'' := b - b'.\]
By \eqref{3}, \eqref{5}, \eqref{50}, these quantities satisfy the clean bounds
\[a''\preccurlyeq \LL^{1/4},\qquad d''\preccurlyeq \LL^{1/4},\qquad (a''-d'')z + b'' \preccurlyeq r \LL^{1/4}.\]
The first two of these conditions imply that $(a''-d'')r \preccurlyeq r \LL^{1/4}$ which, combined with the third one, yields
\[\| (a''-d'')P  + b''\| \preccurlyeq r \LL^{1/4}.\]

This estimate shows that the lattice point $(a-d)P+b\in\Lambda(P)$ has distance $\preccurlyeq r \LL^{1/4}$ from the fixed point $(a'-d')P+b'$. By Lemma~\ref{lem3}, the number of admissible pairs $(a-d, b)$, for a fixed $c\neq 0$, is at most
\[\preccurlyeq  1 +  r^2\LL^{1/2}|N| + r^2\LL.\]
Finally, we have $\preccurlyeq \LL^{1/2}$ choices for $a+d$ by \eqref{2}, and the quadruple $(c, a-d, b, a+d)$ determines the matrix $\gamma$ completely. We conclude, using also \eqref{6} and \eqref{sizes}, that
\begin{equation}\label{volume1}
M_1(P, L, L^2, \delta) \preccurlyeq \frac{L}{(r|N|)^2}L\left(1 +  r^2 L |N| + r^2 L^2\right) \ll L^2 + \frac{L^4}{|N|^2}.
\end{equation}

For $M_3(P, L, L^4, \delta)$ we argue similarly, utilizing also the facts that  $l$ is a square and $(a-d)^2 + 4bc\neq 0$ to count the final number of choices for $a+d$ more efficiently. Namely, the decomposition
\begin{equation}\label{traceformula}
0\neq (a-d)^2 + 4bc = (a+d)^2 - 4l = (a+d-2\sqrt{l})(a+d+2\sqrt{l})
\end{equation}
coupled with \eqref{2} shows that each triple $(c, a-d, b)$ gives rise to $\preccurlyeq 1$ choices for $a+d$.
We conclude that
\begin{equation}\label{volume2}
M_3(P, L, L^4, \delta) \preccurlyeq \frac{L^2}{(r|N|)^2} \left(1 +  r^2 L^2  |N| + r^2 L^4\right)\ll L^2 + \frac{L^6}{|N|^2}.
\end{equation}
In particular, using also \eqref{Mtotal} and \eqref{M1bound2},
\begin{equation}\label{volume2b}
M_1(P, L, L^4, \delta) \preccurlyeq L^6.
\end{equation}

Along with the results of Section~\ref{ParabolicSection}, the bounds \eqref{volume1} and \eqref{volume2} provide the necessary estimates in \eqref{11} to complete the proof of Theorem~\ref{thm1} in Section~\ref{EndgameSection}.

\section{Counting IV -- Small distances}

In this section, we develop a variant of the bounds \eqref{volume1} and \eqref{volume2} for $0<\delta\leq 1$ small. We fix an integer $0\leq n<2\pi/\sqrt{\delta}$, and count those matrices $\gamma$ entering $M_1(P,L,\LL,\delta)$ whose determinant $l$ satisfies
\begin{equation}\label{satisfies}
n\sqrt{\delta}\leq\arg(l)<(n+1)\sqrt{\delta},
\end{equation}
but is otherwise not fixed. The improvement from this subdivision comes from fine-tuning the argument to employ \eqref{second} in place of \eqref{50}.

First, we fix some $c\neq 0$ that occurs in \eqref{matrices}. Along with $n$ and $c$, we also fix a matrix $\gamma'=\left(\begin{smallmatrix}a'&b'\\c&d'\end{smallmatrix}\right)$ satisfying \eqref{matrices}. Then, any matrix $\gamma=\left(\begin{smallmatrix}a&b\\c&d\end{smallmatrix}\right)$ satisfying \eqref{matrices} with the same $c$ is determined by
the differences
\[a'' := a - a', \qquad d'' := d - d',  \qquad b'' := b - b'.\]
Moreover, by  \eqref{satisfies}  the determinants $l:=\det\gamma$ and $l':=\det\gamma'$ satisfy
\begin{equation}\label{lratio}
l/l'=x+O(\sqrt{\delta})\qquad\text{and}\qquad \sqrt{l}/\sqrt{l'}=\sqrt{x}+O(\sqrt{\delta})
\end{equation}
with $x:=|l/l'|\asymp 1$ real.
By \eqref{3}, \eqref{5}, \eqref{2}, \eqref{second}, the above differences satisfy the clean bounds
\begin{equation}\label{cleanbounds}
a''\ll \LL^{1/4},\qquad d''\ll \LL^{1/4},\qquad (a''-d'')z + b'' \ll r \LL^{1/4}\sqrt{\delta}.
\end{equation}
Moreover, from \eqref{2}, \eqref{third} and \eqref{lratio} we can derive that
\[\re\left(\frac{a''-d''}{\sqrt{l'}}\right),\  \im\left(\frac{a''+d''}{\sqrt{l'}}\right)\ll\sqrt{\delta}.\]
In particular, each of the lattice points $a-d=(a'-d')+(a''-d'')$ and $a+d=(a'+d')+(a''+d'')$ lies in a fixed rectangle of side lengths $\ll\LL^{1/4}\sqrt{\delta}$ and $\ll \LL^{1/4}$, whence the number of possibilities for it is $\ll \LL^{1/4}+\LL^{1/2}\sqrt{\delta}$ by a standard area argument. For a fixed $a-d$, we have $\ll 1+r^2\LL^{1/2}\delta$ choices for $b=b'+b''$ by \eqref{cleanbounds}, and the quadruple $(c, a-d, b, a+d)$ determines the matrix $\gamma$ completely.

Combining the contributions from the various angular sectors defined by \eqref{satisfies}, and  using also \eqref{6} and \eqref{sizes}, we conclude that
\[M_1(P, L, L^2, \delta)\ll\frac{1}{\sqrt{\delta}}\cdot\frac{L}{(r|N|)^2}(\sqrt{L}+L\sqrt{\delta})^2(1+r^2 L\delta)
\ll \frac{L^2}{\sqrt{\delta}}+L^4\delta^{3/2}.\]
The left hand side is non-decreasing in $\delta$, while the right hand side minimizes at $\delta\asymp L^{-1}$, hence for $\delta\leq L^{-1}$ we replace $\delta$ by $L^{-1}$ on the right hand side. This yields
\begin{equation}\label{volume3}
M_1(P, L, L^2, \delta)\ll\begin{cases}
L^{5/2}, & 0< \delta \leq L^{-1};\\
L^4\delta^{3/2}, & L^{-1} < \delta \leq 1.
\end{cases}
\end{equation}

For $M_3(P, L, L^4, \delta)$ we argue similarly, utilizing also the facts that  $l$ is a square and $(a-d)^2 + 4bc\neq 0$.
Specifically, \eqref{traceformula} coupled with \eqref{2} shows that each triple $(c, a-d, b)$ gives rise to $\preccurlyeq 1$ choices for $a+d$. We conclude that
\[M_3(P, L, L^4, \delta)\preccurlyeq\frac{1}{\sqrt{\delta}}\cdot\frac{L^2}{(r|N|)^2}(L+L^2\sqrt{\delta})(1+r^2 L^2\delta)
\ll \frac{L^3}{\sqrt{\delta}}+L^6\delta.\]
The left hand side is non-decreasing in $\delta$, while the right hand side minimizes at $\delta\asymp L^{-2}$, hence for $\delta\leq L^{-2}$ we replace $\delta$ by $L^{-2}$ on the right hand side. This yields, using also \eqref{Mtotal} and \eqref{M1bound2},
\begin{equation}\label{volume4}
M_1(P, L, L^4, \delta)\preccurlyeq\begin{cases}
L^4, & 0 < \delta \leq L^{-4};\\
L^6\delta^{1/2}, & L^{-4} < \delta \leq 1.
\end{cases}
\end{equation}

\section{Counting V -- Tiny distances}\label{TinyDistancesSection}

Although strong in certain ranges, the bounds \eqref{volume3} and \eqref{volume4} do not yet suffice to prove Theorems~\ref{thm2} and \ref{thm3}. In this section, we use a very different counting method that is superior for $0<\delta\leq 1$ very small. It is convenient to treat the middle range and the high range separately.

\subsection{Middle range} In order to bound $M_1(P, L, L^2, \delta)$, we recall that
$l\in D(L,L^2)$ implies that $l$ is of size $|l|\asymp L$, namely $l$ is the product of two split Gaussian primes from $P(L)$. In \eqref{matrices} we choose first $c\neq 0$ in $\ll L$ ways according to \eqref{6} and \eqref{sizes}, then $a$ in $\ll L$ ways according to \eqref{7}. Now \eqref{5} implies $|l|^2=\|cP-a\|^4+O(L^2\sqrt{\delta})$, hence there are $\ll 1+L^2\sqrt{\delta}$ possible values for $|l|^2\in\ZZ$. Once we fix a value for this integer, $l$ itself is determined by its special shape. Then $d$ is restricted to a disk of radius $\ll\sqrt{L\delta}$ by \eqref{third}, leaving $\ll 1+L\delta$ possible values for it. As the quadruple $(c, a, l, d)$ determines the matrix $\gamma$ completely, we have proved
\[M_1(P, L, L^2, \delta)\ll L^2(1+L^2\sqrt{\delta})(1+L\delta).\]
Combining this bound with \eqref{volume1} and \eqref{volume3}, we end up with
\begin{equation}\label{eig9}
M_1(P, L, L^2, \delta)\preccurlyeq\begin{cases}
L^2, & 0< \delta \leq L^{-4};\\
L^4\delta^{1/2}, & L^{-4} < \delta \leq L^{-3};\\
L^{5/2}, & L^{-3} < \delta \leq L^{-1};\\
L^4\delta^{3/2}, & L^{-1} < \delta \preccurlyeq 1.
\end{cases}
\end{equation}

\subsection{High range}\label{TinyDistancesHighRange} In order to bound $M_1(P, L, L^4, \delta)$, we recall that
$l\in D(L,L^4)$ implies that  $l$ is a square of size $|l|\asymp L^2$, namely
$\lambda:=\sqrt{l}$ is the product of two split Gaussian primes from $P(L)$. We
assume below that $0<\delta\leq\kappa L^{-4}$, where $\kappa>0$ is a sufficiently small absolute constant to make our claims valid; for example, we use without further comment that a bound $\ll\sqrt{\kappa}$ implies $<1/2$.

We observe first that the strong bound on $\delta$ forces that $a+d=n\lambda$ for some rational integer $n\in\ZZ$. By \eqref{third} the lattice triangle with vertices $0$, $\lambda$, $a+d$ has (a half-integral) area
\[\frac{1}{2}|\lambda|^2\left|\im\left(\frac{a+d}{\lambda}\right)\right|\ll L^2\sqrt{\delta}\leq\sqrt{\kappa},\]
whence\footnote{The same idea underlies Sublemma~2 of Koyama's paper~\cite{Ko}.} $a+d=x\lambda$ for some $x\in\RR$. We use arithmetic in $\ZZ[i]$ to see that $x$ equals a rational integer $n\in\ZZ$. Clearly, $x\in\RR\cap\QQ(i)=\QQ$, i.e. $x=n/m$ for some coprime $m,n\in\ZZ$ with $m>0$. Then $m(a+d)=n\lambda$ implies $m\mid\lambda$ in $\ZZ[i]$, so that $m=1$ by the special shape of $\lambda$. In short, $x=n$ as claimed. From $a+d=n\lambda$ and \eqref{2}, it follows that $n\in\ZZ$ is bounded. We fix the integer $n\ll 1$ for the rest of this section.

We choose a parameter $L\leq Q\leq L^2$, and apply Dirichlet's  approximation theorem (in a version for $\ZZ[i]$). This allows us to write
\[Nz = \frac{p}{q}  + O\left(\frac{1}{|q|Q}\right)\]
for suitable $p, q \in \ZZ[i]$ and $1 \leq |q| \leq Q$ and $(p, q) = 1$. We shall give two bounds for
$M_1(P, L, L^4, \delta)$, namely \eqref{eig5} and \eqref{eig6} below, the first to be used when $q$ is small (``$z$ well approximable"), the second to be used when $q$ is large (``$z$ badly approximable"). We write $c=c'N$ throughout, so that $c'\ll L$ by \eqref{sizes} and \eqref{2}.\\

For our first bound, we multiply the first part of \eqref{third} by the positive integer $|q\lambda|^2$ to rewrite it in the equivalent form
\[\re\left((2cz-a+d)q\cdot\ov{q\lambda}\right)\ll |q|^2L^2\sqrt{\delta}.\]
The idea behind this move is that $(2cz-a+d)q$ is well-approximated by $\xi:=2c'p-aq+dq\in\ZZ[i]$, namely
\begin{equation}\label{approximation}
(2cz-a+d)q=2c'(Nz)q-aq+dq=\xi+O(c'Q^{-1}).
\end{equation}
In particular, $\xi\ll |q|L$ follows from \eqref{2} and $c'\ll L$.
All these bounds imply that $\xi\ov{q\lambda}\in\ZZ[i]$ has real part $\ll|q|\Delta$, where
\begin{equation}\label{Deltadef}
\Delta:=|q|L^2\sqrt{\delta}+L^2 Q^{-1}.
\end{equation}
We note that $\Delta\geq 1$, because $Q\leq L^2$. In particular,
$\xi\ov{q\lambda}$ lies in a rectangle with side lengths $\ll|q|\Delta$ and $\ll|q|^2 L^2$, hence the Gaussian integer
$\xi\ov{\lambda}$ lies in a (not necessarily axis-parallel) rectangle with side lengths $\ll\Delta$ and $\ll|q|L^2$. By the usual area argument, it follows that there are $\ll |q|\Delta L^2$ possible values for the product $\xi\ov{\lambda}$. If this product is zero, then $\xi=0$, and we have $\ll L^2$ choices for the pair $(\xi,\lambda)$. Otherwise, the product determines
the pair $(\xi,\lambda)$ up to multiplicity $\preccurlyeq 1$ by the standard divisor bound. We conclude that
\[\#(\xi,\lambda)\preccurlyeq |q|\Delta L^2.\]

For fixed $\xi$ and $\lambda$, the matrix $\gamma$ is completely determined by $c'$ via the equations
\[2c'p-2aq=\xi-n\lambda q\qquad\text{and}\qquad 2c'p+2dq=\xi+n\lambda q,\]
hence it remains to estimate the number of choices for $c'$. Using the above two identities, the determinant equation $\lambda^2=ad-bc$ can be rewritten as
\[4\lambda^2 q^2=(2c'p-\xi+n\lambda q)(-2c'p+\xi+n\lambda q)-4bcq^2,\]
\[4c'(c'p^2-\xi p+Nbq^2)=(n^2-4)\lambda^2q^2-\xi^2.\]
If the right hand side is nonzero, then there are $\preccurlyeq 1$ choices for its divisor $c'$. We claim that the same conclusion also holds when the right hand side is zero. Indeed, in this case \eqref{approximation} implies that
\[(a-d)^2+4bc=(n^2-4)\lambda^2=(\xi/q)^2=\bigl(2cz-a+d+O( c' |q|^{-1}Q^{-1})\bigr)^2.\]
We square out on the right hand side, and compare the result with the left hand side. We obtain, using also that $2cz-a+d\ll L$ and $c'\ll L$,
\[-4(cz)^2+4(cz)(a-d)+4bc\ll \frac{Lc'}{|q|Q},\]
therefore
\[-cz^2+(a-d)z+b\ll\frac{L}{N|q|Q}.\]
This means that the first term in \eqref{second} is small, namely,
\[r^2 Nc'\ll \frac{L}{N|q|Q}+rL\sqrt{\delta},\]
and hence $c'\ll LQ^{-1}+L\sqrt{\delta}\ll 1$ by \eqref{sizes}, $L\leq Q$ and $\delta\leq\kappa L^{-4}$. So there are $\preccurlyeq 1$ choices for $c'$ simply because it is bounded. Altogether, we have proved that
\begin{equation}\label{eig5}
M_1(P, L, L^4, \delta) \preccurlyeq |q|\Delta L^2.
\end{equation}
This bound is strong when $|q|$ is small.\\

For our second bound, we estimate the number of possibilities for the pair $(c,a)$ more directly. Once this pair is chosen, \eqref{5} implies $|\lambda|^2=\|cP-a\|^2+O(L^2\sqrt{\delta})$, and hence the integer $|\lambda|^2$ is uniquely determined in the light of $\delta\leq\kappa L^{-4}$. As a result, $\lambda$ itself is determined by its special shape, and consequently the pair $(d,b)$ is determined by the equations $a+d=n\lambda$ and $ad-bc=\lambda^2$.

The basis of our estimations is equation \eqref{5}. It shows that,
for a given nonzero $c'\ll L$, the possible points $a$ lie in a disc of radius $\ll L$, and they also satisfy
\[\|c'p+cqrj-aq\|=\|cqP-aq\|+O(LQ^{-1})=|q\lambda|+O(|q|L\sqrt{\delta}+LQ^{-1}).\]
Squaring both sides and using $\lambda\ll L$, we obtain with the notation \eqref{Deltadef},
\[\re(2p\ov{q}c'\ov{a})=|q|^2(|a|^2-|\lambda|^2)+|c'p|^2+|cqr|^2+O(|q|\Delta).\]
For every $c'$ that occurs in our count, we fix one choice of corresponding $a_{c'}$ and $\lambda_{c'}$. Then the Gaussian integer $u:=c'(\ov{a}-\ov{a_{c'}})\ll L^2$ satisfies
\[p\ov{q}u+\ov{p}q\ov{u}=|q|^2(|a|^2-|a_{c'}|^2-|\lambda|^2+|\lambda_{c'}|^2)+O(|q|\Delta).\]
The left hand side lies in $\ZZ\cap(q,\ov{q})\ZZ[i]$, which is a lattice in $\RR$ of minimal length $\geq |(q,\ov{q})|$, so by $\Delta\geq 1$ this rational integer falls into $\ll |q|\Delta/|(q,\bar{q})$ residue classes modulo $|q|^2$, each lying in $(q,\bar{q})$.
Fixing one such residue class, the corresponding Gaussian integers $u$ lie in a translate of $\ker\theta$, where $\theta:\ZZ[i]\to\ZZ/|q|^2\ZZ$ is the homomorphism of additive groups given by $v\mapsto p\ov{q}v+\ov{p}q\ov{v}\pmod{|q|^2}$. The elements of $\ker\theta$ are divisible by $q/(q,p\ov{q})=q/(q,\ov{q})$. Moreover, $\imm\theta$ contains any residue class $t\mod{|q|^2}$ with $t\in\ZZ\cap 2(q,\ov{q})\ZZ[i]$. Indeed, $(q,\ov{q})=(p\ov{q},\ov{p}q,q\ov{q})$ shows that $t=2(p\ov{q}v_1+\ov{p}q v_2+q\ov{q} v_3)$ for some $v_1,v_2,v_3\in\ZZ[i]$, and then
\[t=(t+\ov{t})/2=p\ov{q}v+\ov{p}q\ov{v}+|q|^2 w\]
with $v:=v_1+\ov{v_2}$ a Gaussian integer and $w:=v_3+\ov{v_3}$ a rational integer. Identifying $\ZZ[i]$ with $\ZZ^2$,
we get that $\ker\theta$ is a lattice in $\RR^2$ of minimal length $\geq |q|/|(q,\ov{q})|$ and covolume
$[\ZZ[i]:\ker\theta]=\#\imm\theta\gg |q|^2/(q,\ov{q})$. So we conclude from Lemma~\ref{lem2} (with $n=2$, $R\ll L^2$) and $|q|\leq Q\leq L^2$ that
\[\#u \ll\frac{|q|\Delta}{|(q,\ov{q})|}\left(1+\frac{L^2|(q,\ov{q})|}{|q|}+\frac{L^4|(q,\ov{q})|}{|q|^2}\right)
\leq\Delta\left(|q|+L^2+\frac{L^4}{|q|}\right)\ll\frac{\Delta L^4}{|q|}.\]
This bounds the number of possibilities for $u = c'(\ov{a}-\ov{a_{c'}})$. If this product is zero, then
$a=a_{c'}$, and we have $\ll L^2$ choices for the pair $(c,a)$. Otherwise, the product determines the pair $(c,a)$ up to multiplicity
$\preccurlyeq 1$ by the standard divisor bound. Altogether, we have proved that
\begin{equation}\label{eig6}
M_1(P, L, L^4, \delta) \preccurlyeq L^2+\frac{\Delta L^4}{|q|}\ll\frac{\Delta L^4}{|q|}.
\end{equation}
This bound gives stronger results when $|q|$ is large.\\

We have established the bounds \eqref{eig5} and \eqref{eig6} under the assumptions $0<\delta\leq\kappa L^{-4}$ and $L\leq Q\leq L^2$.
For $|q|\leq L$ we use \eqref{eig5}, while for $|q|>L$ we use \eqref{eig6}. In both cases we obtain
\[M_1(P, L, L^4, \delta)\preccurlyeq L^6\sqrt{\delta}+L^5Q^{-1}.\]
Obviously it is best to choose $Q := L^2$  to minimize the right hand side. This yields
\[M_1(P, L, L^4, \delta)\preccurlyeq\begin{cases}
L^3, & 0< \delta \leq L^{-6};\\
L^{6}\sqrt{\delta} , & L^{-6}< \delta \leq \kappa L^{-4}.
\end{cases}\]

Combining this bound with \eqref{volume2b} and \eqref{volume4}, we end up with
\begin{equation}\label{eig8}
M_1(P, L, L^4, \delta)\preccurlyeq\begin{cases}
L^3, & 0< \delta \leq L^{-6};\\
L^6\sqrt{\delta}, & L^{-6} < \delta \preccurlyeq 1.
\end{cases}
\end{equation}

\section{The endgame}
\label{EndgameSection}

Theorems~\ref{thm1} to~\ref{thm3} are trivial when $TV$ is bounded, hence we can assume that $TV$ is sufficiently large in terms of $\eps$. Then the only condition on the amplifier length is the bound $(TV)^{\eps}\leq L\leq TV$, as specified in Section~\ref{amplifiedsection}.

\subsection{Proof of Theorem~\ref{thm1}} We assume $r\leq L^{-1}(TV)^{-\eps}$ to enforce $M_2(P, L, L^4, \delta)=0$ through \eqref{M1bound1}. Then \eqref{11}, \eqref{Mtotal}, \eqref{volume1}, \eqref{volume2} imply the clean bound
\[|\phi_{j_0}(P)|^2 \preccurlyeq T^2 \left(\frac{1}{L} + \frac{L^2}{|N|^{2}}\right).\]
This optimizes at $L := |N|^{2/3}(TV)^\eps$, where it furnishes
\[|\phi_{j_0}(P)|\preccurlyeq T|N|^{-1/3}\qquad\text{for}\qquad r\leq |N|^{-2/3}(TV)^{-2\eps}.\]
By Lemma~\ref{fourier} this bound remains true for $r > |N|^{-2/3}(TV)^{-2\eps}$, and we conclude Theorem~\ref{thm1}.

\subsection{Proof of Theorem~\ref{thm2}} By \eqref{eig9} and \eqref{eig8} we have the uniform bounds
\begin{align*}
M_1(P, L, L^2, \delta)&\preccurlyeq L^2+L^4\sqrt{\delta},\qquad 0<\delta\preccurlyeq 1;\\
M_1(P, L, L^4, \delta)&\preccurlyeq L^3+L^6\sqrt{\delta},\qquad 0<\delta\preccurlyeq 1.
\end{align*}
Inserting these into \eqref{11}, we obtain that
\[|\phi_{j_0}(P)|^2 \preccurlyeq \sup_{0<\delta\preccurlyeq 1}\min\left(T^2, \frac{T}{\sqrt{\delta}}\right)
\left(r^2\delta+\frac{1}{L}+L^2\sqrt{\delta}\right).\]
In particular,
\[|\phi_{j_0}(P)|^2  \preccurlyeq Tr^2 + T^2L^{-1}+ TL^2.\]
Upon choosing $L := T^{1/3}(TV)^\eps$, we arrive at
\[|\phi_{j_0}(P )|^2 \preccurlyeq Tr^2 + T^{5/3} \ll T^{5/3},\]
provided $r \leq T^{1/3}$. By Lemma~\ref{fourier} this bound remains true for $r > T^{1/3}$, and we conclude Theorem~\ref{thm2}.

\subsection{Proof of Theorem~\ref{thm3}} We interpolate between Theorem~\ref{thm1} and~\ref{thm2}. This gives
\[\| \phi \|_{\infty} \preccurlyeq T \min(V^{-1/6}, T^{-1/6}) \leq T  (V^{-1/6})^{1/3} ( T^{-1/6})^{2/3},\]
and Theorem~\ref{thm3} follows.

\end{document}